\title{Definable henselian valuations}
\author{Franziska Jahnke}
\address{Institut f\"ur Mathematische Logik\\Einsteinstr. 62\\48149 M\"unster, 
Germany}
\email{franziska.jahnke@wwu.de}
\author[Jochen Koenigsmann]{Jochen Koenigsmann}
\address{Mathematical Institute\\Woodstock Road\\Oxford OX2 6CG, UK}
\email{koenigsmann@maths.ox.ac.uk}
\newtheorem{Th}{Theorem}[section]
\newtheorem{Thm}[Th]{Theorem}
\newtheorem*{Def}{Definition}
\newtheorem{Cor}[Th]{Corollary}
\newtheorem{Prop}[Th]{Proposition}
\newtheorem*{Ex}{Example}
\newtheorem*{Rem}{Remark}
\newtheorem{Obs}[Th]{Observation}
\newtheorem{Lem}[Th]{Lemma}
\thanks{The results in 
this paper are part of the first author's PhD thesis
under the supervision of the second author.
The research leading to these results has received funding from the 
[European Community's] Seventh Framework Programme 
[FP7/2007-2013] under grant agreement 
number 238381.}
\keywords{Valuations, henselian valued fields, 
definable valuations, absolute Galois groups}
\subjclass{Primary: 03C40, 12E30, 12J10. Secondary: 03C60, 12L12.}
\begin{document}
\begin{abstract}
In this note we investigate the question 
when a henselian valued field
carries a non-trivial $\emptyset$-definable 
henselian valuation (in the language of rings). 
This is clearly not possible when the field is either separably or real closed,
and,
by the work of Prestel and Ziegler, there
are further examples of henselian valued fields which do not admit a
$\emptyset$-definable non-trivial hen\-selian valuation.
We give conditions on the residue field which ensure the
existence of a parameter-free definiton.
In particular, we show that 
a henselian valued field admits a non-trivial henselian 
$\emptyset$-definable valuation 
when the residue field is separably closed or
sufficiently non-henselian, or when the absolute Galois group of the (residue) 
field is non-universal.
\end{abstract}

\maketitle

\section{Introduction}
In a henselian valued field $(K,v)$, many arithmetic or algebraic questions
can be reduced, via the henselian valuation $v$, to simpler questions about
the value group $vK$ and the residue field $Kv$.
By the celebrated Ax-Kochen/Ershov Principle, in fact, 
if the residue characteristic is $0$, `everything' can be so reduced:
the 1st-order theory of $(K,v)$ (as valued field)
is fully determined by the 1st-order theory
of $vK$ (as ordered abelian group) and of $Kv$ (as pure field).
In that sense the valuation (with its two accompanying structures $vK$ and $Kv$)
`knows' everything about $K$, especially the full 1st-order theory of 
$K$ as pure field, or, as one may call it, the {\em arithmetic} of $K$.

Conversely, in all natural examples, and, as we will see, 
in most others as well, a henselian valuation $v$ is so intrinsic to $K$
that it is itself encoded in the arithmetic of $K$,
or, to make this notion precise, that its valuation ring ${\mathcal O}_v$
is 1st-order definable in $K$.
Well known examples are the classical fields
$\mathbb{Q}_p$ and $\mathbb{C}((t))$
with their valuation rings
$$\begin{array}{rcl}
\mathbb{Z}_p & = & \{ x\in\mathbb{Q}_p\mid\exists y\; 1+px^2=y^2\}\mbox{ (for }p\neq 2)\\
\mathbb{C}[[t]] & = & \{ x\in\mathbb{C}((t))\mid\exists y\; 1+tx^2=y^2\}
\end{array}$$
Note that the second example uses the parameter $t$.
This is not necessary though:
one can also find a parameter-free definition
of $\mathbb{C}[[t]]$ in $\mathbb{C}((t))$;
however, as observed in \cite{CDLM13},
it can no longer be an existential definition:
otherwise the definition would go up the tower of isomorphic fields
$$\mathbb{C}((t))\subseteq\mathbb{C}((t^{1/2!}))\subseteq\mathbb{C}((t^{1/3!}))\subseteq\ldots$$
thus leading to a 1st-order definition of a non-trivial valuation subring
of the algebraically closed field
$\mathbb{C}((t^{1/\infty}))=\bigcup_n\mathbb{C}((t^{1/n!}))$,
contradicting quantifier eliminiation
(every definable subset is finite or cofinite).

That $\mathbb{C}[[t]]$ {\em is} $\emptyset$-definable in $\mathbb{C}((t))$
follows from the more general fact
that every henselian valuation
with non-divisible archimedean 
value group is $\emptyset$-definable (\cite{Koe04}).
This has recently been generalized to non-divisible regular value groups
(those elementarily equivalent to archimedean ordered
groups, see \cite{Hon14}). 
Note that there are also several recent preprints which discuss
$\emptyset$-definability of a range of henselian valuations
using only formulae of `simple' quantifier type 
(i.e.\;definitions involving
$\forall$-,$\exists$-,$\forall\exists$ or $\exists\forall$-formulae).
To learn more about 
these exciting developments, we refer the reader to \cite{CDLM13},
\cite{AK13}, \cite{Fe13} and \cite{Pr14}.

In this paper we will develop two new, fairly general criteria, 
one on the residue field and one on the absolute Galois group $G_K$ of $K$
to guarantee $\emptyset$-definability of
(in the first case a given, in the second case, at least some)
henselian valuation on $K$. 
It is well-known that separably and real closed fields admit no
definable henselian valuations. Furthermore,
by the work of Prestel and Ziegler
(\cite{PZ78}, \S 7)
there are henselian valued fields which are neither separably nor real closed 
and which do not admit any
$\emptyset$-definable henselian valuation. 
It is thus a natural question to ask which conditions on a henselian
valued field $(K,v)$ ensure that $v$ is $\emptyset$-definable or that
$K$ admits at least some $\emptyset$-definable henselian valuation. 
In the present work, we focus on parameter-free definitions as a
definition of a henselian valuation 
with parameters need not ensure the existence of a definable 
henselian valuation in elementarily equivalent fields.
Note that there are also examples of
henselian valuations which are not even definable with parameters (see
\cite{DF96}, Theorem 4.4). The only known examples of henselian fields which
admit no parameter-definable henselian valuations at all are 
separably and real closed fields. 

The paper is organized as follows. In the next section, we 
discuss the main tools which we require. We recall the definition of 
$p$-henselian valuations and the 
canonical ($p$)-henselian valuation. Building on work of the second author
(see \cite{Koe95}), the authors 
have shown that 
the canonical $p$-henselian valuation $v_K^p$ is
typically definable (Theorem 3.1 in \cite{JK14}). 
We show that it is furthermore
henselian iff it is coarser than the canonical henselian valuation.

The third section contains the main results of this paper. We begin by giving
conditions on the residue field to make a henselian 
valuation definable.
The first criterion says that the henselian valuation $v$ on $K$
is $\emptyset$-definable
if, for some prime $p$,
$Kv$ allows a separable extension $L$ with $L \neq L(p)$
that does not allow a $p$-henselian valuation
(Theorem \ref{def1}, cf.\;section 2 for the definition of $L(p)$ and
 $p$-henselian).
We deduce from this that any henselian valuation with
finitely generated, hilbertian, PAC or 
simple but not separably closed residue field is
$\emptyset$-definable.
We use a similar method to show 
that a henselian valued field $(K,v)$
where $Kv$ is separably or real closed, but $K$ isn't,
admits some $\emptyset$-definable henselian valuation.

The next part discusses a second, Galois-theoretic criterion
for the existence of a $\emptyset$-definable henselian valuation
on a (non-separably- and non-real-closed) henselian valued field $K$
(Theorem \ref{non-universal}).
It says that if $K$ is henselian and $G_K$ is {\em non-universal},
that is, that not every finite group is a subquotient of $G_K$,
then $K$ admits some $\emptyset$-definable non-trivial henselian valuation.
In most cases, we will in fact define the canonical henselian valuation on $K$.
This generalizes old results by Neukirch, Geyer and Pop
on henselian fields with prosolvable $G_K$.
One class of examples is given by henselian
NIP fields of positive characteristic.

These two criteria,
one on the residue field of a given henselian valuation $v$ on $K$,
and one on $G_K$ in the presence of {\em some} henselian valuation on $K$,
are fairly independent.
One easily finds examples of the first kind where $G_K$ {\em is} universal
and examples where it isn't.
Similarly, there are henselian fields $K$ with non-universal $G_K$
where every henselian valuation on $K$ satisfies the criterion on the 
residue field and such where none of them does.
What is common between the two criteria, however,
is the method of proof which in either case depends on a careful analysis
when, on a field $K$, the canonical $p$-henselian valuation $v_K^p$
is already henselian.
Although many fields have universal absolute Galois groups, 
the best known ones are hilbertian fields and PAC fields
with non-abelian free absolute Galois group. Hence some of the main examples of 
henselian valued fields
for which the second criterion fails are covered by the first one.

\section{Henselian and $p$-henselian valuations}
\subsection{The canonical henselian valuation}
We call a field $K$ \emph{henselian} if it admits some non-trivial henselian
valuation. 
For any field $K$, there is
a \emph{canonical henselian valuation} on $K$. In this section, we
recall the definition and discuss some of its properties.
We use the following notation: For a valued field $(K,v)$,
we denote the valuation ring by ${\mathcal O}_v$, the residue field by $Kv$,
the value group by $vK$ and the maximal ideal by ${\mathfrak m}_v$. 
For an element $a \in {\mathcal O}_v$, we write 
$\overline{a}$ to refer to its image in $Kv$.

\begin{Th}[\'a la F.K. Schmidt] \label{fk}
If a field admits two independent non-trivial hen\-se\-lian valuations, 
then it is
separably closed.
\end{Th}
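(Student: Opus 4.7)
The plan is to follow F.\,K.\,Schmidt's classical argument. Suppose $v_1, v_2$ are two independent non-trivial henselian valuations on $K$ and, aiming for a contradiction, assume that $K$ is not separably closed. Fix a monic separable irreducible polynomial $f(X)\in K[X]$ of degree $n\geq 2$, with roots $\alpha_1,\ldots,\alpha_n\in K^{\mathrm{sep}}$.

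Each $v_i$ is henselian, so it extends uniquely to a valuation $\bar v_i$ on $K^{\mathrm{sep}}$. Because this extension is unique, it is $\mathrm{Gal}(K^{\mathrm{sep}}/K)$-invariant. From this rigidity I would extract that $\bar v_i(\alpha_j)$ is independent of $j$, and more generally that $\bar v_i(\alpha_j-\alpha_k)$ depends only on the Galois orbit of the pair $(j,k)$. This completely controls how the roots of $f$ sit relative to each other under $\bar v_1$ and $\bar v_2$.

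Next I would use the approximation theorem for pairwise independent valuations (a standard consequence of the fact that $v_1,v_2$ are independent) to construct an element $a \in K$ for which the $v_1$- and $v_2$-behaviour can be prescribed separately. Combined with a shift $f(X+a)$ or a scaling $a^{-n}f(aX)$, this should produce some $b \in K$ satisfying Hensel's inequality $v_1(f(b)) > 2\,v_1(f'(b))$; Hensel's lemma at $v_1$ then yields a root of $f$ in $K$, contradicting irreducibility.

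The main obstacle is bridging the approximation theorem (which produces elements of $K$) and the Hensel criterion (whose natural formulation involves distances to the $\alpha_j \in K^{\mathrm{sep}}$). The bridge is the observation that irreducibility of $f$ over the henselian field $(K,v_1)$ amounts to saying that no element of $K$ is substantially $\bar v_1$-closer to a single $\alpha_j$ than to all other roots combined; the extra freedom granted by $v_2$, via independence, is precisely what breaks this constraint, and with care the two pieces of data are glued into the single element $b \in K$ needed to reach the contradiction.
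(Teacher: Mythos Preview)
The paper does not actually prove this theorem; it simply cites \cite{EP05}, Theorem~4.4.1. So there is no argument in the paper to compare against, but your sketch has a genuine gap that is worth naming.

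Your plan is to produce $b\in K$ with $v_1(f(b))>2\,v_1(f'(b))$ and then invoke Hensel at $v_1$ to obtain a root of $f$ in $K$. This cannot succeed as stated: since $(K,v_1)$ is henselian and $f$ is separable irreducible of degree $\geq 2$, \emph{no} element $b\in K$ satisfies that inequality---if one did, Hensel would immediately give a root of $f$ in $K$, contradicting irreducibility. The operations you propose, a shift $f(X+a)$ or a scaling $a^{-n}f(aX)$, are $K$-linear changes of variable and hence preserve irreducibility over $K$; they cannot manufacture the forbidden $b$. The presence of $v_2$ does not help here: the approximation theorem lets you control the $v_1$- and $v_2$-values of elements of $K$ simultaneously, but it cannot push an element of $K$ arbitrarily $\bar v_1$-close to some $\alpha_j\notin K$---Krasner's lemma is precisely the obstruction you yourself identify in the last paragraph.

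The standard repair, and the substance of the argument in \cite{EP05}, is to perturb the \emph{coefficients} of $f$ rather than merely to shift or scale. Writing $f(X)=X^n+a_{n-1}X^{n-1}+\cdots+a_0$, use approximation for the independent pair $v_1,v_2$ to choose $b_i\in K$ with $v_1(a_i-b_i)$ very large while $g(X):=X^n+b_{n-1}X^{n-1}+\cdots+b_0$ reduces modulo $\mathfrak m_{v_2}$ to a polynomial with a simple root (for instance $X^n-X^{n-1}$). Hensel at $v_2$ now yields a root $\beta\in K$ of $g$; continuity of roots in the $v_1$-topology forces $\beta$ to be $\bar v_1$-close to some $\alpha_j$, and Krasner at $v_1$ gives $\alpha_j\in K(\beta)=K$, the desired contradiction. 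Your final paragraph gestures toward this bridge but does not supply it; the missing idea is exactly that one must leave the $K$-isomorphism class of $f$ and apply Hensel at the \emph{other} valuation.
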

\begin{proof} \cite{EP05}, Theorem 4.4.1.
\end{proof}

One can deduce from this that 
the henselian valuations on a field
form a tree: Divide the class of henselian valuations on $K$
 into
two subclasses, namely
$$H_1(K) = \Set{v \textrm{ henselian on } K | Kv \neq Kv^{sep} }$$
and
$$H_2(K) = \Set{ v \textrm{ henselian on } K | Kv = Kv^{sep} }.$$
A corollary of the above theorem is that any valuation $v_2 \in H_2(K)$ 
is \emph{finer} than any $v_1 \in H_1(K)$, i.e.  
${\mathcal O}_{v_2} \subsetneq {\mathcal O}_{v_1}$,
and that any two valuations in $H_1(K)$ are comparable.
Furthermore, if $H_2(K)$ is non-empty, then there exists a unique coarsest
$v_K \in H_2(K)$; otherwise there exists a unique finest $v_K \in H_1(K)$.
In either case, $v_K$ is called the \emph{canonical henselian valuation}.
Note that if $K$ is not separably closed and
admits a non-trivial henselian valuation, then $v_K$
is also non-trivial.

As we will usually define henselian valuations 
on finite Galois extensions later on, we often use the fact that 
coarsenings of $v_K$ remain henselian when restricted 
to subfields of finite index:
\begin{Th}[\cite{EP05}, Theorem 4.4.4] 
\label{down}
Let $(L,w)$ be a valued field, and assume that $L$ is not separably closed and 
that $w$ is a (not necessarily proper)
coarsening of $v_L$. If $K \subset L$ is a subfield such that $L/K$ is finite, 
then $v=w|_K$ is a coarsening of $v_K$.
\end{Th}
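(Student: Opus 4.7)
The plan is to argue by contradiction, exploiting the tree structure of henselian valuations on a non-separably-closed field together with the fact that unique henselian extensions of valuations are given by integral closures.

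First I would check that $v := w|_K$ is itself henselian on $K$. Since $w$ is henselian on $L$ and $L/K$ is algebraic, any two extensions of $v$ to an algebraic extension $K'/K$ extend further to valuations on the compositum $K'\cdot L$, where henselianity of $w$ forces them to coincide on $K'L$ and hence on $K'$. The conclusion is immediate if $v$ is trivial, so assume $v$ is non-trivial. Note also that $K$ is not separably closed (otherwise the algebraic extension $L$ would be as well, contradicting the hypothesis), so $v_K$ is defined and non-trivial.

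Now suppose for contradiction that $v$ does not coarsen $v_K$. By Theorem \ref{fk} any two henselian valuations on the non-separably-closed $K$ are comparable, so ${\mathcal O}_v\subsetneq{\mathcal O}_{v_K}$. I would then argue that this forces both $v_K$ and $v$ into $H_2(K)$: if $v_K\in H_1(K)$, then by construction $H_2(K)=\emptyset$ and $v_K$ is the finest element of $H_1(K)$, so nothing can be strictly finer than $v_K$; and any $v\in H_1(K)$ is coarser, not finer, than the $H_2$-valuation $v_K$. Hence $Kv$ and $Kv_K$ are both separably closed.

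Next I would lift to $L$ via integral closures. Let $\tilde v_K$ denote the unique henselian extension of $v_K$ to $L$. Because the henselianity of $v_K$ (resp.~of $v$) makes ${\mathcal O}_{\tilde v_K}$ (resp.~${\mathcal O}_w$) the integral closure of ${\mathcal O}_{v_K}$ (resp.~${\mathcal O}_v$) in $L$, the inclusion ${\mathcal O}_v\subseteq{\mathcal O}_{v_K}$ lifts to ${\mathcal O}_w\subseteq{\mathcal O}_{\tilde v_K}$, i.e.~$\tilde v_K$ coarsens $w$ on $L$. On the residue side, $L\tilde v_K$ and $Lw$ are algebraic extensions of the separably closed fields $Kv_K$ and $Kv$, hence separably closed, so $\tilde v_K, w\in H_2(L)$. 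Since $v_L$ is the coarsest element of $H_2(L)$, it coarsens both $\tilde v_K$ and $w$; combined with the hypothesis that $w$ coarsens $v_L$ this gives $w=v_L$, and combined with $\tilde v_K$ coarsens $w=v_L$ it gives $\tilde v_K=v_L=w$. Restricting to $K$ yields $v_K=v$, contradicting the strict inclusion ${\mathcal O}_v\subsetneq{\mathcal O}_{v_K}$.

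The step I expect to be the most delicate is the integral-closure lifting: it is the only place where henselianity, rather than the mere tree structure, is used. For a non-henselian valuation the integral closure in $L$ is only a semilocal Pr\"ufer ring with one maximal ideal per extension, and the clean comparison between ${\mathcal O}_w$ and ${\mathcal O}_{\tilde v_K}$ would collapse.
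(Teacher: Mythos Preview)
The paper does not give its own proof of this statement; it simply cites \cite{EP05}, Theorem~4.4.4. So there is no in-paper argument to compare against, and the relevant question is whether your attempt is correct.

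There is a genuine gap in your first step, where you claim that $v=w|_K$ is henselian. You argue that two extensions $v_1,v_2$ of $v$ to some $K'/K$ ``extend further to valuations on $K'L$, where henselianity of $w$ forces them to coincide''. But henselianity of $w$ only says that \emph{$w$} has a unique prolongation to $K'L$; it says nothing about an arbitrary prolongation $u_i$ of $v_i$. There is no reason that $u_i|_L=w$: already with $K'=L$ this would amount to asserting that $w$ is the only extension of $v$ to $L$, which is precisely what must be shown. Notice that you never use the hypothesis that $w$ coarsens $v_L$ in this step, and it is essential. A correct argument (in the Galois case, to which one reduces) runs as follows: every $\sigma\in\mathrm{Gal}(L/K)$ fixes $\mathcal{O}_{v_L}$ by canonicity, hence permutes the overrings of $\mathcal{O}_{v_L}$; these form a chain, and since $\sigma$ has finite order it must fix each of them, so $\sigma\mathcal{O}_w=\mathcal{O}_w$. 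Thus every $\tau\in G_K$ sends the unique prolongation of $w$ to $K^{sep}$ to another such prolongation, hence fixes it, and $v$ is henselian. Your remark that the integral-closure lifting is the delicate step is therefore misplaced: that step is fine once henselianity of $v$ is in hand, and it is the henselianity itself that carries the weight.

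A smaller slip: Theorem~\ref{fk} only yields that two non-trivial henselian valuations on a non-separably-closed field are \emph{dependent}, not comparable; two elements of $H_2(K)$ may well be incomparable (they correspond to arbitrary valuations on the separably closed field $Kv_K$). What you actually need, and what does hold, is that every henselian valuation on $K$ is comparable to $v_K$ specifically; this follows from the $H_1/H_2$ description you invoke immediately afterwards, not from F.~K.~Schmidt alone. With these two points repaired, the rest of your argument (forcing $v,v_K\in H_2(K)$, lifting the inclusion via integral closures, and collapsing everything to $v_L$ inside $H_2(L)$) goes through.
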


\subsection{p-henselianity}
Throughout this section, let $K$ be a field and $p$ a prime.
\begin{Def}
We define $K(p)$ to be the compositum of all Galois extensions of $K$ of 
$p$-power degree. 
A valuation $v$ on $K$ is called
\emph{$p$-henselian} if $v$ extends uniquely to $K(p)$.
We call $K$ \emph{$p$-henselian} if $K$ 
admits a non-trivial
$p$-henselian valuation. 
\end{Def}
Clearly, this definition only imposes a condition on $v$ if $K$ admits
Galois extensions
of $p$-power degree. 

\begin{Prop}[\cite{Koe95}, Propositions 1.2 and 1.3] \label{phenseq}
For a valued field $(K,v)$, the following are equivalent:
\begin{enumerate}
\item $v$ is $p$-henselian,
\item $v$ extends uniquely to every Galois extension of $K$ of $p$-power
degree,
\item $v$ extends uniquely to every Galois extension of $K$ of degree
$p$,
\item for every polynomial $f \in {\mathcal O}_v$ which splits in $K(p)$ and
every $a \in {\mathcal O}_v$ with $\bar{f}(\overline{a}) = 0$ and
$\bar{f'}(\overline{a}) \neq 0$, there exists $\alpha
\in {\mathcal O}_v$ with $f(\alpha)=0$ and $\overline{\alpha}=\overline{a}$.
\end{enumerate}
\end{Prop}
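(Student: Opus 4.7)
The plan is to arrange the equivalences as a cycle $(1)\Rightarrow(2)\Rightarrow(3)\Rightarrow(1)$ handled by pure Galois theory, and then treat $(1)\Leftrightarrow(4)$ as a separate Hensel-style lifting argument. The cornerstone of the first block is that $G:=\mathrm{Gal}(K(p)/K)$ is by construction a pro-$p$ group, so every proper closed subgroup of $G$ is contained in an open normal subgroup of index $p$ (maximal closed proper subgroups of a pro-$p$ group are automatically normal with cyclic quotient of order $p$). This single fact will drive the nontrivial implication $(3)\Rightarrow(1)$.

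For $(1)\Rightarrow(2)$, if $L/K$ is an intermediate Galois subextension of $K(p)/K$ and $v$ had two distinct extensions $w_1,w_2$ to $L$, then each $w_i$ would extend further to $K(p)$, producing two distinct extensions of $v$ there and contradicting $(1)$. The implication $(2)\Rightarrow(3)$ is immediate. For $(3)\Rightarrow(1)$, fix some extension $w$ of $v$ to $K(p)$ with decomposition group $Z\leq G$, and suppose $Z\neq G$. Choose an open normal subgroup $M\trianglelefteq G$ of index $p$ with $Z\leq M$, and set $L:=K(p)^M$; then $L/K$ is Galois of degree $p$, and the decomposition group of $w|_L$ inside $\mathrm{Gal}(L/K)=G/M$ is the image $ZM/M$, which is trivial. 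Hence $w|_L$ has $p$ distinct $K$-conjugate extensions, i.e.\ $v$ does not extend uniquely to $L$, contradicting $(3)$.

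For $(1)\Rightarrow(4)$, let $w$ denote the unique extension of $v$ to $K(p)$. Since $f$ splits in $K(p)$, write $f=c\prod_{i}(X-\alpha_i)$ with $\alpha_i\in K(p)$; because $f\in\mathcal{O}_v[X]$ one sees $\alpha_i\in\mathcal{O}_w$, and the residues $\overline{\alpha_i}$ exhaust the roots of $\bar f$ in $K(p)w$. Since $\bar f'(\bar a)\neq 0$, exactly one $\overline{\alpha_i}$ — say $\overline{\alpha_1}$ — equals $\bar a$. Any $\sigma\in G$ preserves $w$ by uniqueness, hence induces a residue automorphism of $K(p)w/Kv$ fixing $\bar a\in Kv$; therefore $\overline{\sigma(\alpha_1)}=\sigma(\overline{\alpha_1})=\bar a$, which by uniqueness of the simple residue root forces $\sigma(\alpha_1)=\alpha_1$. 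Thus $\alpha_1\in K(p)^G=K$, giving the required lift.

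The reverse direction $(4)\Rightarrow(3)$ (which closes the whole equivalence) is the most delicate step. The plan is the contrapositive: assume $v$ admits two distinct extensions $w_1,w_2$ to some Galois extension $L/K$ of degree $p$, and construct a polynomial witnessing the failure of $(4)$. Using weak approximation for the finitely many extensions of $v$ to $L$, one can select a primitive element $\beta\in L$ lying in $\mathcal{O}_{w_1}\cap\mathcal{O}_{w_2}$ whose residues $\overline{\beta}^{w_1}$ and $\overline{\beta}^{w_2}$ are chosen independently; arranging this so that the minimal polynomial $f\in\mathcal{O}_v[X]$ of $\beta$ has $\bar f$ possess a prescribed simple root $\bar a\in Kv$ (for instance by fixing the residue under $w_1$ and an unrelated residue under $w_2$, and then translating) produces a polynomial that splits in $K(p)$ and satisfies the hypotheses of $(4)$ but has no root in $K$ — contradicting $(4)$. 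The main obstacle I anticipate is precisely this engineering: one must coordinate the weak-approximation choices so that the resulting $f$ has integral coefficients, a simple residue root, and genuinely no $K$-root, which forces some care in separating the contributions of the different extensions of $v$.
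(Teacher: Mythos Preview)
The paper does not give a proof of this proposition at all; it is simply quoted from \cite{Koe95}, Propositions 1.2 and 1.3. So there is nothing in the present paper to compare your argument against. That said, your outline is the standard one and is essentially correct, with two points worth tightening.

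In $(1)\Rightarrow(4)$ you assert that from $f\in\mathcal{O}_v[X]$ ``one sees $\alpha_i\in\mathcal{O}_w$''. This fails for non-monic $f$: take $f=pX^2-(p+1)X+1=(pX-1)(X-1)$ over $\mathbb{Q}_p$, where $\bar f=-X+1$ has the simple root $1$ but the root $1/p$ is not integral. What is true, and suffices for your Galois-invariance argument, is that \emph{exactly one} root $\alpha$ of $f$ satisfies $w(\alpha-a)>0$: since $v(f(a))>0$ and $v(f'(a))=0$, the Newton polygon of $f(X+a)$ has a unique segment of positive slope with horizontal length $1$. Apply your uniqueness-of-extension argument to this particular $\alpha$.

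For $(4)\Rightarrow(3)$ your plan is right but needlessly tentative; the construction is completely explicit. If $v$ is not $p$-henselian on a degree-$p$ Galois extension $L/K$, then $v$ has exactly $p$ pairwise incomparable extensions $w_1,\dots,w_p$ (since $efg=p$). By weak approximation pick $\beta\in L$ with $\overline{\beta}^{w_1}=0$ and $\overline{\beta}^{w_i}=1$ for $i\ge 2$; then $\beta\notin K$, so $\beta$ generates $L$. Every conjugate $\sigma(\beta)$ lies in each $\mathcal{O}_{w_i}$ (because $w_i(\sigma(\beta))=w_{\sigma^{-1}(i)}(\beta)\ge 0$), hence the minimal polynomial $f$ of $\beta$ lies in $\mathcal{O}_v[X]$, and reducing modulo $\mathfrak{m}_{w_1}$ gives $\bar f(X)=\prod_j\bigl(X-\overline{\beta}^{w_j}\bigr)=X(X-1)^{p-1}$. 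Thus $\bar a=0$ is a simple root of $\bar f$, $f$ splits in $K(p)$, yet $f$ is irreducible over $K$ --- contradicting $(4)$. No further ``engineering'' is needed.
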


As for fields carrying a henselian valuation, there is again a canonical
$p$-henselian valuation, due to the following analogue of Theorem \ref{fk}:

\begin{Thm}[\cite{Br76}, Corollary 1.5]
If $K$ carries two independent non-trivial 
$p$-hen\-se\-lian valuations, then $K = K(p)$.
\end{Thm}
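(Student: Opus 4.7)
My plan is to adapt the classical F.K.\ Schmidt argument of Theorem \ref{fk} to the $p$-henselian setting, with Proposition \ref{phenseq}(4) playing the role of Hensel's lemma. So I assume $v_1, v_2$ are independent nontrivial $p$-henselian valuations on $K$ but $K \neq K(p)$, and aim for a contradiction. Since $K \neq K(p)$, there is a cyclic Galois extension $L/K$ of degree exactly $p$. After passing, if necessary, to $K(\zeta_p)$---which is harmless since $[K(\zeta_p):K]$ divides $p-1$ and so is coprime to $p$, so $v_1, v_2$ lift uniquely to $K(\zeta_p)$, remain independent and $p$-henselian there, and $K(\zeta_p) \neq K(\zeta_p)(p) = K(p)$---I may describe $L$ by an irreducible polynomial $f(X) \in K[X]$ of degree $p$ in standard form: $f(X) = X^p - a$ (Kummer, in characteristic $\neq p$) or $f(X) = X^p - X - c$ (Artin--Schreier, in characteristic $p$); in either case $f$ splits in $K(p)$.

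Now I invoke the approximation theorem for independent valuations---the diagonal image of $K$ in $K_{v_1}\times K_{v_2}$ is dense in the product topology---to construct a modified defining parameter $a'$ (or $c'$) of the same standard form with two properties: it is $v_1$-close to the original $a$ (or $c$), and the associated modified polynomial reduces modulo $\mathfrak{m}_{v_2}$ to one with a simple root in $Kv_2$. For instance, I force $a' \equiv 1 \pmod{\mathfrak{m}_{v_2}}$, so that $X^p - \overline{a'}$ becomes $X^p - 1$ with simple roots $\zeta_p^i \in Kv_2$; or I force $c' \equiv 0 \pmod{\mathfrak{m}_{v_2}}$, so that $X^p - X$ has the simple residue root $0$ (since its derivative is $-1 \neq 0$). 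The modified polynomial still splits over $K(p)$, its splitting field being a cyclic $p$-extension of $K$. So Proposition \ref{phenseq}(4) applied with $v_2$ yields a root of the modified polynomial in $K$; thus $a' \in K^{\times p}$ (Kummer) or $c' \in \{y^p - y : y \in K\}$ (Artin--Schreier). A second application of Proposition \ref{phenseq}(4), this time with $v_1$ on the \emph{original} polynomial $f$---legitimate because the $v_1$-residue of the root just produced is a simple root of $\bar{f}$ in $Kv_1$ (automatic in Artin--Schreier since $(X^p-X)'=-1$; in Kummer after first ensuring $v_1(a) = 0$ by rescaling $a$ by a suitable $p$-th power)---then yields a root of $f$ in $K$, contradicting the irreducibility of $f$ and hence $[L:K]=p$.

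The main technical obstacle is the middle step: coordinating the $v_1$-closeness of $a'$ to $a$ with the prescribed $v_2$-residue, while keeping everything inside the appropriate valuation rings. The approximation theorem and the independence of $v_1, v_2$ make this possible, but some bookkeeping about value-group residue classes modulo $p$ is required---one may need to first rescale the defining parameter by a $p$-th power to ensure that its value at $v_2$ (and, in the Kummer case, at $v_1$) lies in $p \cdot v_iK$, so that the approximation can be realized by a $v_i$-unit. Handling these normalizations cleanly in both the Kummer and Artin--Schreier cases, and verifying that the two applications of Proposition \ref{phenseq}(4) chain together to give a root of $f$ in $K$, is the bulk of the routine work; the remaining case $\mathrm{char}\,K \neq p$ with $\zeta_p \notin K$ is absorbed by the initial reduction to $K(\zeta_p)$.
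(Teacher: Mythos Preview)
The paper does not supply a proof of this theorem; it simply cites \cite{Br76}, Corollary~1.5. Your strategy---adapting the F.K.\ Schmidt argument via approximation and the Hensel-type lifting of Proposition~\ref{phenseq}(4)---is natural, but it has a genuine gap in mixed characteristic. Suppose $\mathrm{char}(K)\neq p$ but $\mathrm{char}(Kv_i)=p$ for one (or both) of the valuations. Then the derivative of your Kummer polynomial $f(X)=X^p-a$ is $pX^{p-1}$, which reduces to $0$ in $Kv_i$; so $\bar f$ has no simple root whatsoever and Proposition~\ref{phenseq}(4) is inapplicable. Forcing $a'\equiv 1\pmod{\mathfrak m_{v_2}}$ does not help, since $X^p-1=(X-1)^p$ in residue characteristic $p$, and the second lift at $v_1$ is blocked for the same reason. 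The mixed-characteristic case thus cannot be absorbed into either the Kummer or the Artin--Schreier branch of your case split, and the chain of two Hensel lifts that drives the argument never gets started there.

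Two smaller points also need attention. The reduction to $\zeta_p\in K$ asserts that extensions of the $v_i$ to $K(\zeta_p)$ remain $p$-henselian, but the paper itself warns (just before Proposition~\ref{finite}) that $p$-henselianity need not go up algebraic extensions, and $K(\zeta_p)$ can acquire degree-$p$ Galois extensions that do not descend to $K$; so this step is not automatic from the prime-to-$p$ degree alone. And your proposed normalisation ``rescale $a$ by a $p$-th power to achieve $v_1(a)\in p\cdot v_1K$'' is impossible precisely when the class of $v_1(a)$ in $v_1K/p\,v_1K$ is nonzero, i.e.\ in the totally ramified case; that situation needs its own treatment rather than being dismissed as bookkeeping.
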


We again divide the class of 
$p$-henselian valuations on $K$
 into
two subclasses,
$$H^p_1(K) = \Set{v\; p\textrm{-henselian on } K | Kv \neq Kv(p)}$$
and
$$H^p_2(K) = \Set{ v\; p\textrm{-henselian on } K | Kv = Kv(p) }.$$
As before, 
one can deduce that any valuation $v_2 \in H^p_2(K)$ 
is \emph{finer} than any $v_1 \in H^p_1(K)$, i.e. 
${\mathcal O}_{v_2} \subsetneq {\mathcal O}_{v_1}$,
and that any two valuations in $H^p_1(K)$ are comparable.
Furthermore, if $H^p_2(K)$ is non-empty, then there exists a unique coarsest
valuation
$v_K^p$ in $H^p_2(K)$; otherwise there exists a unique finest 
valuation $v_K^p \in H^p_1(K)$.
In either case, $v_K^p$ is called the \emph{canonical $p$-henselian valuation}.
Again, if $K$ is $p$-henselian and $K \neq K(p)$ holds, 
then $v_K^p$ is also non-trivial.

Note that unlike henselianity, being $p$-henselian does not go up
arbitrary algebraic extensions, as a superfield might have far more
extensions of $p$-power degree.
Nevertheless, similar to Theorem \ref{down}, sometimes 
$p$-henselianity goes down:
\begin{Prop} 
\label{finite} Let $K$ be a field, $K\neq K(p)$.
Assume that $L$ is a normal algebraic
extension of $K$, where $L$ is 
$p$-henselian and $L\neq L(p)$.
If
\begin{enumerate}
\item $K \subseteq L \subsetneq K(p)$ or
\item $L/K$ is finite
\end{enumerate}
then $K$ is $p$-henselian.
\end{Prop}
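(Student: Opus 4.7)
The plan is to let $w := v_L^p$ be the canonical $p$-henselian valuation on $L$, which is non-trivial since $L$ is $p$-henselian with $L \neq L(p)$. I set $v := w|_K$ and aim to show that $v$ is a non-trivial $p$-henselian valuation on $K$.

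The first ingredient is that $v_L^p$ is canonical and hence stable under every field automorphism: for each $\sigma \in \mathrm{Aut}(L/K)$ one has $\sigma w = w$, since $\sigma w$ is still a canonical $p$-henselian valuation on $L$. Because $L/K$ is normal and algebraic, any two extensions of $v$ to $L$ form a single $\mathrm{Aut}(L/K)$-orbit, so $w$ is in fact the \emph{unique} extension of $v$ to $L$. Non-triviality of $v$ follows quickly: if $v$ were trivial, then $K$ would sit inside $\mathcal{O}_w$, and since $\mathcal{O}_w$ is integrally closed in $L$ and $L/K$ is algebraic, every element of $L$ would be integral over $\mathcal{O}_w$ and hence lie in $\mathcal{O}_w$, forcing $L = \mathcal{O}_w$ and contradicting non-triviality of $w$.

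For $p$-henselianity of $v$, I would show directly that $v$ extends uniquely to $K(p)$ by working in the compositum $M := L \cdot K(p)$. In case~(1), $L \subseteq K(p)$ gives $M = K(p)$, and $\mathrm{Gal}(K(p)/L)$ is a closed subgroup of the pro-$p$ group $\mathrm{Gal}(K(p)/K)$, hence pro-$p$; so $M \subseteq L(p)$. In case~(2), $L/K$ is finite and normal, so $M/L$ is Galois with $\mathrm{Gal}(M/L)$ embedding by restriction into $\mathrm{Gal}(K(p)/K)$, and again $M \subseteq L(p)$. Now given two extensions $\tilde v_1, \tilde v_2$ of $v$ to $K(p)$, extend each further to valuations $V_1, V_2$ on $M$. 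Each $V_i|_L$ is an extension of $v$ to $L$, hence $V_i|_L = w$ by the uniqueness established above. So $V_1$ and $V_2$ are both extensions of $w$ to $M \subseteq L(p)$, and $p$-henselianity of $w$ on $L$ forces $V_1 = V_2$. Restricting back to $K(p)$ gives $\tilde v_1 = \tilde v_2$, so $v$ is $p$-henselian on $K$.

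The point I expect to be most delicate is the unified treatment of the two cases via the inclusion $L \cdot K(p) \subseteq L(p)$; case~(2) in particular relies on the standard compositum identification of $\mathrm{Gal}(L \cdot K(p)/L)$ with a closed subgroup of the pro-$p$ group $\mathrm{Gal}(K(p)/K)$, and this is precisely what lets the hypothesis ``$w$ is $p$-henselian on $L$'' propagate down to $K(p)$.
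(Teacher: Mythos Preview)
Your argument is correct and takes a genuinely different route from the paper's. The paper dispatches case~(1) by citation to \cite{Koe03} and then proves case~(2) by contradiction: assuming $K$ is not $p$-henselian, it shows (via case~(1)) that \emph{every} valuation on $K$ has infinitely many extensions to $K(p)$, and then counts prolongations through the finite layer $L$ to conclude that no valuation on $L$ can be $p$-henselian. Your approach is instead direct and treats both cases at once: you single out the canonical $p$-henselian valuation $w=v_L^p$, use its $\mathrm{Aut}(L/K)$-invariance together with the conjugation theorem for normal extensions to get that $w$ is the \emph{unique} extension of $v=w|_K$ to $L$, and then push uniqueness up to $K(p)$ via the inclusion $L\cdot K(p)\subseteq L(p)$. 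What your approach buys is self-containment (no external citation for case~(1)) and a uniform mechanism for both cases; what the paper's approach buys is a slightly stronger intermediate statement in case~(2), namely that \emph{no} valuation on $L$ whatsoever can be $p$-henselian when $K$ fails to be---though this extra strength is not used elsewhere. The one step worth stating more carefully in your write-up is the invocation of the conjugation theorem for arbitrary (possibly infinite) normal algebraic extensions, since case~(1) needs it in that generality; this is standard (e.g.\ \cite{EP05}, Theorem~3.2.15).
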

\begin{proof} 1.: See \cite{Koe03}, Proposition 2.10.\\
2.: Assume $K$ is not $p$-henselian, and let $v$ be a valuation on $K$.
By the first part of the proposition, $v$ has infinitely many extensions to 
$K(p)$: If there were only $n$ extensions of $v$ to $K(p)$, then there
would be some $L' \supset K$ finite, $L' \subsetneq K(p)$, such that
$v$ had $n$ extensions to $L'$. The normal hull of $L'$ and thus $K$ would be 
$p$-henselian.\\
Now assume $L=K(a_1,\dotsc, a_m)$ finite and normal, then 
$K(p)(a_1, \dotsc, a_m) \subseteq L(p)$. If $w$ is a valuation on $L$, 
then $v = w|_K$ has infinitely many prolongations to $K(p)$.
As $v$ has only finitely many prolongations to $L$, and all these
are conjugate, $w$ must have infinitely many prolongations 
to $K(p)(a_1, \dotsc, a_m)$ and hence to $L(p)$.
\end{proof}

For any valued field, $p$-extensions of the residue field lift to $p$-extensions
of the field.
\begin{Prop}[\cite{EP05}, Theorem 4.2.6] \label{pex}
Let $(K,v)$ be a valued field and $p$ a prime. If $Kv \neq Kv(p)$, 
then $K \neq K(p)$.
\end{Prop}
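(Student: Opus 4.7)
Plan:

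The plan is to lift a cyclic Galois degree-$p$ extension of $Kv$ (which exists because the maximal pro-$p$ quotient of $G_{Kv}$ is nontrivial, by the structure theory of pro-$p$ groups) to a Galois $p$-extension of $K$. I will split into cases according to $\operatorname{char}(K)$ and whether $\mu_p\subseteq K$, with the remaining case requiring a twisted Kummer descent.

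First I would handle the two direct cases. If $\operatorname{char}(K)=p$, apply Artin--Schreier: pick $\bar a\in Kv\setminus\wp(Kv)$ with $\wp(x)=x^p-x$, and lift to $a\in\mathcal O_v$. A hypothetical relation $a=\wp(b)$ with $b\in K$ forces $b\in\mathcal O_v$ (compare valuations of $b^p$ and $b$) and reduces to $\bar a\in\wp(Kv)$, a contradiction; so $K(\alpha)/K$ with $\wp(\alpha)=a$ is the desired cyclic Galois degree-$p$ extension. If $\operatorname{char}(K)\neq p$ and $\mu_p\subseteq K$, apply Kummer analogously: $\mu_p\hookrightarrow Kv$ (since $p\neq\operatorname{char}(Kv)$), so by Kummer theory find $\bar a\in Kv^*\setminus(Kv^*)^p$ and lift to $a\in\mathcal O_v^*$. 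If $a=b^p$ with $b\in K^*$, then $v(b)=0$ (as $vK$ is torsion-free) and $\bar a=\bar b^p$, a contradiction; so $K(\sqrt[p]{a})/K$ is cyclic Galois of degree $p$.

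The main obstacle is the residual case $\mu_p\not\subseteq K$ with $\operatorname{char}(K)\neq p$. I would first pass to $K':=K(\mu_p)$, of degree $d\mid p-1$ coprime to $p$, and extend $v$ to $v'$. A base-change computation, intersecting the Galois degree-$p$ residue extension $\bar L/Kv$ with $K'v'$ (which has coprime-to-$p$ degree over $Kv$, forcing $\bar L\cap K'v'=Kv$), shows $K'v'\neq K'v'(p)$; applying the Kummer case to $(K',v')$ yields a cyclic Galois $p$-extension of $K'$. To descend this to $K$, I would exploit the $\operatorname{Gal}(K'/K)$-action on $K'^*/(K'^*)^p$: by Maschke's theorem (as $\gcd(d,p)=1$) this $\mathbb F_p$-representation is semisimple, and cyclic Galois degree-$p$ extensions of $K$ correspond exactly to nonzero classes in the $\chi$-eigenspace, where $\chi$ is the cyclotomic character of $\operatorname{Gal}(K'/K)$ on $\mu_p$. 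Nontriviality of this eigenspace follows by a Galois-equivariant lift from the corresponding nontrivial $\chi$-eigenspace in $K'v^*/(K'v^*)^p$---which classifies $\bar L/Kv$ via Kummer descent over the residue field---using the $\operatorname{Gal}(K'/K)$-equivariance and surjectivity of the reduction map $\mathcal O_{v'}^*\to K'v^*$.
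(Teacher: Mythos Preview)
Your argument has a genuine gap in the mixed-characteristic case $\operatorname{char}(K)=0$, $\operatorname{char}(Kv)=p$. In Case~2 you assert ``$\mu_p\hookrightarrow Kv$ (since $p\neq\operatorname{char}(Kv)$)'', but $\operatorname{char}(K)\neq p$ does \emph{not} force $\operatorname{char}(Kv)\neq p$: take for instance $K=\mathbb{Q}_p(\zeta_p)$ with the $p$-adic valuation, where $Kv=\mathbb{F}_p$. In that situation $Kv\neq Kv(p)$ (there is the Artin--Schreier extension $\mathbb{F}_{p^p}$), yet $Kv^{*}=(Kv^{*})^p$, so there is no $\bar a\in Kv^{*}\setminus(Kv^{*})^p$ to lift and your Kummer step is vacuous. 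The same obstruction propagates to Case~3: after adjoining $\mu_p$ the residue characteristic is unchanged, so the appeal to ``the Kummer case'' on $(K',v')$ fails for the same reason, and your eigenspace descent---which ultimately rests on Kummer theory for $K'v'$---never gets off the ground.

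The paper does not give its own argument here; it simply invokes \cite[Theorem~4.2.6]{EP05}, which is a general lifting theorem for residue-field extensions: every finite separable (resp.\ Galois) extension $\bar L/Kv$ lifts to a finite separable (resp.\ Galois) extension $L/K$ of the same degree with an extension $w$ of $v$ satisfying $Lw=\bar L$. That result is proved uniformly in all characteristics via the henselization and handles the mixed case you are missing. Your Artin--Schreier and Kummer arguments are correct in equal characteristic and are more hands-on than the cited theorem, but to complete your approach you would need a separate treatment of mixed characteristic $(0,p)$---for example by lifting the Artin--Schreier generator of $\bar L/Kv$ to an unramified cyclic degree-$p$ extension through the henselization and then descending to $K$, which is essentially what the Engler--Prestel theorem packages.
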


\subsection{Defining $p$-henselian valuations}
In this section, we recall a Corollary of
the Main Theorem in \cite{JK14} which is used 
in all of our proofs in
later sections.

When it comes to henselian valued fields, real closed fields always
play a special role. By o-minimality, no real closed field admits a definable
henselian valuation, and there are real closed fields
which admit no henselian valuations (like $\mathbb{R}$) whereas others
do (like $\mathbb{R}((t^\mathbb{Q}))$). These difficulties are reflected 
by $2$-henselian valuations on Euclidean fields.
A field $K$ is called \emph{Euclidean} if $[K(2):K] = 2$. 
Any Euclidean field
is uniquely ordered, the positive elements being exactly the squares.
If a Euclidean field has no odd-degree extensions, then it is real closed.
In particular, there is an ${\mathcal L}_\textrm{ring}$-sentence
$\rho$ such that any field $K$ with $K\neq K(2)$ 
models $\rho$ iff it is non-Euclidean. Note that Euclidean fields
are the only fields for which
$K(p)$ can be a finite proper extension of $K$.

\begin{Thm}[Corollary 3.3 in \cite{JK14}]
\label{pdef} \label{def} 
Let $p$ be a prime and 
consider the class of fields
$$\mathcal{K} = \Set{ K | K \;p\textrm{-henselian, with }\zeta_p \in K
\textrm{ in case } \mathrm{char}(K)\neq p}$$
There is a parameter-free $\mathcal{L}_\textrm{ring}$-formula $\phi_p(x)$
such that 
\begin{enumerate}
\item if $p \neq 2$ or $Kv_2$ is not Euclidean, then $\phi_p(x)$ defines 
the valuation ring of the canonical
$p$-henselian valuation $v_K^p$, and
\item if $p=2$ and $Kv_2$ is Euclidean, then $\phi_p(x)$ defines the valuation
ring of the
coarsest $2$-henselian valuation $v_K^{2*}$ such that
$Kv_K^{2*}$ is Euclidean.
\end{enumerate}
\end{Thm}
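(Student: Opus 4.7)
The plan is to construct $\phi_p(x)$ out of Kummer-theoretic rigidity invariants, following the template set by Koenigsmann in \cite{Koe95}. The hypothesis defining $\mathcal{K}$---namely $\mathrm{char}(K)=p$ or $\zeta_p \in K$---is what makes everything available: under it, Kummer theory (respectively Artin--Schreier theory in equicharacteristic $p$) sets up a parameter-free correspondence between the cyclic $p$-extensions of $K$ and the cosets in $K^\times/(K^\times)^p$, so conditions on $K(p)/K$ can be read off by an $\mathcal{L}_\textrm{ring}$-formula in $K$.

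First I would formalise the notion of a \emph{$p$-rigid} element: essentially, $t\in K^\times$ is $p$-rigid if the Kummer extension $K(\sqrt[p]{t})/K$ is controlled purely by the coset $t(K^\times)^p$, captured by a first-order condition of the flavour ``every element of the form $1+ty^p$ is itself a $p$-th power''. Combining the tree structure of $p$-henselian valuations (the analogue of Theorem \ref{fk} proved in \cite{Br76}) with the characterisation of $p$-henselianity from Proposition \ref{phenseq}, one then checks that for a $p$-henselian $K$ with $K\neq K(p)$ the set of $p$-rigid elements together with $0$ is, up to an identifiable correction, exactly $\mathcal{O}_{v_K^p}$. This gives a rigidity formula that cuts out the desired valuation ring on the nose.

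Next I would address the one genuine obstruction: at $p=2$, a Euclidean residue field introduces \emph{extra} rigid elements coming from the unique ordering (every element is $\pm$ a square), so the naive rigidity formula defines a strict coarsening of $v_K^2$ rather than $v_K^2$ itself. A direct analysis shows this coarsening is precisely the coarsest $2$-henselian valuation with Euclidean residue field, i.e. $v_K^{2*}$. Since by the paragraph preceding the theorem the Euclidean/non-Euclidean dichotomy is captured by a single parameter-free sentence $\rho$, one can package both clauses into one formula $\phi_p(x)$ by branching on $\rho$ when $p=2$.

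The principal obstacle is \emph{uniformity}: a single formula $\phi_p(x)$ must work across every $K\in\mathcal{K}$ without caring whether $v_K^p$ is trivial, whether $K=K(p)$, or whether $Kv_2$ happens to be Euclidean. Verifying that the rigidity-plus-branching definition really coincides with $\mathcal{O}_{v_K^p}$ (resp.\ $\mathcal{O}_{v_K^{2*}}$) amounts to a Galois-cohomological computation inside $K(p)/K$, using the tree structure cited above to rule out spurious coarsenings or refinements. This is precisely the content of the Main Theorem of \cite{JK14}, from which Theorem \ref{pdef} is extracted as a corollary.
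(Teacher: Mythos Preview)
The paper does not prove this statement at all: it is quoted as Corollary~3.3 of \cite{JK14} and used as a black box for the rest of the article. There is therefore no in-paper argument to compare your proposal against; the paper's ``proof'' is the citation.

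Your outline is nonetheless a reasonable high-level summary of the strategy actually carried out in \cite{JK14} (building on \cite{Koe95}): exploit Kummer/Artin--Schreier theory under the hypothesis $\zeta_p\in K$ or $\mathrm{char}(K)=p$, encode membership in $\mathcal{O}_{v_K^p}$ by a rigidity condition on $p$-th power classes, and isolate the Euclidean anomaly at $p=2$. You even acknowledge in your final sentence that you are describing the Main Theorem of \cite{JK14} rather than giving an independent proof. Two points of caution on the sketch itself: first, the sample rigidity condition you write (``every $1+ty^p$ is a $p$-th power'') is stricter than what is actually used---one needs $1+tK^{\times p}\subseteq K^{\times p}\cup tK^{\times p}$, and the weaker condition is essential for the argument to go through. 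Second, the passage ``the set of $p$-rigid elements together with $0$ is, up to an identifiable correction, exactly $\mathcal{O}_{v_K^p}$'' hides the real work: the rigid elements do not directly form a ring, and reconstructing the valuation ring from them (and showing it is the \emph{canonical} $p$-henselian one, not merely some comparable $p$-henselian valuation) is the substantive content of \cite{JK14}. Your proposal gestures at this step without supplying it, which is consistent with the paper's own decision to cite rather than prove.
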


The existence of such a uniform definition of the canonical $p$-henselian
makes sure that the different cases split into elementary classes:
\begin{Cor} \label{uni}
The classes of fields
$$\mathcal{K}_1 = \Set{K | K \;p\textrm{-henselian, with }\zeta_p \in K
\textrm{ in case } \mathrm{char}(K)\neq p \textrm{ and }v_K^p \in H_1^p(K)}$$
and
$$\mathcal{K}_2 =\Set{K | K \;p \textrm{-henselian, with }\zeta_p \in K
\textrm{ in case } \mathrm{char}(K)\neq p \textrm{ and }v_K^p \in H_2^p(K)}$$
are elementary classes in $\mathcal{L}_\textrm{ring}$.
\end{Cor}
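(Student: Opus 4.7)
The plan is to exploit Theorem~\ref{pdef} to define, uniformly in $K \in \mathcal{K}$, a valuation ring on $K$ via $\phi_p$, and then to distinguish $\mathcal{K}_1$ from $\mathcal{K}_2$ by a first-order property of the resulting residue field. As a preliminary step, I would argue that $\mathcal{K}$ itself is elementary: the requirement $\zeta_p \in K$ when $\mathrm{char}(K) \neq p$ is first-order, and ``$K$ is $p$-henselian'' can be expressed as the disjunction of ``$K = K(p)$'' (first-order via Kummer, or Artin--Schreier when $\mathrm{char}(K) = p$) and ``$\phi_p(K)$ is a non-trivial valuation ring on $K$ satisfying condition~(4) of Proposition~\ref{phenseq}''.

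The key claim to establish is that, for $K \in \mathcal{K}$, one has $K \in \mathcal{K}_1$ if and only if the residue field of $\phi_p(K)$ admits a Galois extension of degree $p$. In the regime where $\phi_p$ defines $v_K^p$ (i.e.\ either $p \neq 2$ or $Kv_K^2$ is non-Euclidean), this is just the definition of $v_K^p \in H_1^p(K)$. In the remaining case ($p = 2$ with $Kv_K^2$ Euclidean), $\phi_2$ defines $v_K^{2*}$, whose residue field is Euclidean by construction; every Euclidean field admits a degree-$2$ Galois extension (adjoin $\sqrt{-1}$), so the right-hand side holds, while the Euclidean-ness of $Kv_K^2$ forces $Kv_K^2 \neq (Kv_K^2)(2)$ and hence $v_K^2 \in H_1^2(K)$, placing $K$ in $\mathcal{K}_1$. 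Both sides therefore agree.

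It remains to observe that ``the residue field of $\phi_p(K)$ admits a Galois extension of degree $p$'' is itself first-order in $K$. Since $\zeta_p \in K$ or $\mathrm{char}(K) = p$, the residue field either contains a primitive $p$-th root of unity (the image of $\zeta_p$, whenever the residue characteristic differs from $p$) or has characteristic $p$; in the first case Kummer theory reduces the question to non-surjectivity of $x \mapsto x^p$ on non-zero residues, and in the second Artin--Schreier reduces it to non-surjectivity of $x \mapsto x^p - x$. Translating these conditions through $\phi_p$ yields $\mathcal{L}_\textrm{ring}$-axiomatisations of $\mathcal{K}_1$ and $\mathcal{K}_2$. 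The main subtlety is the bookkeeping in the Euclidean case, where $\phi_p$ defines $v_K^{2*}$ rather than $v_K^p$; the verification above shows that this discrepancy does not cause any misclassification.
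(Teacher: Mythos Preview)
Your core argument matches the paper's exactly: both split $\mathcal{K}$ via the sentence ``the residue field of $\phi_p(K)$ admits a Galois extension of degree~$p$'', and both handle the Euclidean exception by observing that $\phi_2$ then defines $v_K^{2*}$, whose Euclidean residue still has a degree-$2$ Galois extension, while $v_K^2 \in H_1^2(K)$ anyway. You add the detail (Kummer/Artin--Schreier) for why this residue-field condition is first-order, which the paper leaves implicit.

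Where you diverge is the preliminary step. The paper simply cites Corollary~2.2 of \cite{Koe95} for the elementarity of $\mathcal{K}$, whereas you attempt a direct axiomatisation via the disjunction ``$K = K(p)$'' or ``$\phi_p(K)$ is a non-trivial $p$-henselian valuation ring''. This disjunction does not characterise $p$-henselianity. If $K = K(p)$ then every valuation on $K$ is vacuously $p$-henselian, but $K$ need not carry any non-trivial valuation at all (take $K = \overline{\mathbb{F}_q}$ with $q \neq p$), so the first disjunct can hold without $K$ being $p$-henselian in the paper's sense. Conversely, if $p = 2$ and $K$ itself is Euclidean and $2$-henselian, then the trivial valuation already has Euclidean residue, so $v_K^{2*}$ (and hence $\phi_2(K)$) may be trivial and neither disjunct holds. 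These defects do not affect your dividing sentence, which is correct; but your replacement for the cited result does not stand on its own, and you should either cite as the paper does or restrict explicitly to $K \neq K(p)$ (which is all that is ever used downstream).
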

\begin{proof}
The class 
$$\Set{K | K \;p\textrm{-henselian, with }\zeta_p \in K
\textrm{ in case } \mathrm{char}(K)\neq p}$$
is an elementary class in $\mathcal{L}_\textrm{ring}$ by Corollary 2.2 in 
\cite{Koe95}.
The sentence dividing the class into the two elementary subclasses is the 
statement whether the residue field of the valuation defined by $\phi_p(x)$
as in Theorem \ref{def} admits a Galois extension of degree $p$.
Note that if $p=2$ and $Kv_2$ is Euclidean, both $v_K^2$ and 
$v_K^2*$ are elements of $H_1^p(K)$.
\end{proof}

\begin{Rem}
When one is only interested in defining henselian valuations, one can usually
avoid to consider the special case of a Euclidean residue field:
If $(K,v)$ is a henselian valued field, $K$ not real closed and
$Kv$ Euclidean, then -- similarly to Proposition \ref{pex} -- $K$ is also real, 
so $i \notin K$. 
Now $K(i)$ is a $\emptyset$-interpretable extension of $K$, and the unique
prolongation $w$ of $v$ to $K(i)$ has a non-Euclidean residue field, 
namely $Kv(i)$. Thus, in order to get a parameter-free definition of $v$, 
it suffices to define $w$ without parameters on $K(i)$.

However, the same argument does not work for $p$-henselian valuations, as
there is no strong enough analogue of Theorem \ref{down}. 
Thus, for completeness' sake,
we give Theorem \ref{pdef} in its full generality.
\end{Rem}

\subsection{$p$-henselian valuations as henselian valuations}
Let $K$ be a henselian field and $p$ a prime such that $K\neq K(p)$ holds. 
As any henselian valuation is in particular $p$-henselian, we
have either $v_K^p \supseteq v_K$ or $v_K^p \subsetneq v_K$.
In the first case, $v_K^p$ is henselian. As we will make use of this fact
several times later, we note here that this is in fact an equivalence:

\begin{Obs} \label{coarse}
Let $K$ be a henselian field with $K\neq K(p)$ for some prime $p$.
Then
$v_K^p$ is henselian iff $v_K^p$ coarsens $v_K$.
\end{Obs}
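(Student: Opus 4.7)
The plan is to prove the two implications separately; the backward direction is essentially immediate. For $(\Leftarrow)$, I would note that $v_K$ is henselian (since $K$ is henselian) and any coarsening of a henselian valuation is henselian, so if $v_K^p$ coarsens $v_K$ then $v_K^p$ is henselian.

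For $(\Rightarrow)$, assume $v_K^p$ is henselian, and split into two cases according to whether $H_2(K)$ is empty. Suppose first $H_2(K) \neq \emptyset$, so that $v_K$ is by definition the coarsest element of $H_2(K)$ and $Kv_K$ is separably closed. In particular $Kv_K = Kv_K(p)$, and since $v_K$ is henselian it is also $p$-henselian, so $v_K \in H_2^p(K)$. By definition $v_K^p$ is the coarsest element of $H_2^p(K)$, hence $\mathcal{O}_{v_K^p} \supseteq \mathcal{O}_{v_K}$, i.e.\ $v_K^p$ coarsens $v_K$. Notice that this step does not actually use henselianity of $v_K^p$ at all---in this case the containment is automatic.

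Suppose instead $H_2(K) = \emptyset$, so that $v_K$ is the finest element of $H_1(K)$. Since $K$ is henselian and $K \neq K(p)$, the canonical $p$-henselian valuation $v_K^p$ is non-trivial, and by our assumption it is henselian, so $v_K^p \in H_1(K) \cup H_2(K) = H_1(K)$. Because $H_1(K)$ is totally ordered with $v_K$ as its finest element, $v_K^p$ must coarsen $v_K$.

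The only step that requires any care is keeping the two parallel trees straight---one for henselian, one for $p$-henselian valuations, each divided into an $H_1$- and $H_2$-part. The two facts doing all the work are that separably closed residue fields are $p$-closed (so that any $v \in H_2(K)$ lies in $H_2^p(K)$) and that every henselian valuation is $p$-henselian. Once these are in place, the observation reduces to an unpacking of the definitions of $v_K$ and $v_K^p$.
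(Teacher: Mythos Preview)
Your proof is correct and rests on the same two facts the paper uses: that separably closed residue fields are $p$-closed, and that the henselian and $p$-henselian valuations each form a tree with the canonical valuation sitting at the boundary between the $H_1$- and $H_2$-parts. The only difference is organizational: the paper argues by contradiction (assuming $v_K^p$ is henselian and a proper refinement of $v_K$, it forces $v_K^p \in H_2(K)$, hence $v_K \in H_2(K)$, whence $v_K$ is a strictly coarser element of $H_2^p(K)$ than $v_K^p$), whereas you give a direct case split on whether $H_2(K)$ is empty. Your remark that the case $H_2(K)\neq\emptyset$ does not actually require henselianity of $v_K^p$ is a nice bonus not made explicit in the paper.
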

\begin{proof} Any coarsening of a henselian valuation -- like $v_K$ -- 
is henselian.
Conversely, assume that $v_K^p$ is henselian and a proper refinement of $v_K$.
Then, by the definition of $v_K$, we get $v_K^p \in H_2(K)$ and
hence $v_K \in H_2(K)$. In this case, $v_K^p$ has a proper coarsening
with $p$-closed residue field, contradicting the definition of
$v_K^p$.
\end{proof}

\section{Main results}
\subsection{Conditions on the residue field}
We first want to show that we can use the canonical $p$-henselian
valuation to define any henselian valuation 
which has not $p$-henselian residue field. 
\begin{Prop} \label{notpdef}
Let $(K,v)$ be a non-trivially
henselian valued field and $p$ a prime. Assume that the residue field
$Kv$ is not $p$-henselian and that $Kv\neq Kv(p)$. 
If $p=2$, assume further that $Kv$ is not Euclidean.
Then $v$ is $\emptyset$-definable.
\end{Prop}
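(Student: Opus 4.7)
The plan is to show that $v$ coincides with the canonical $p$-henselian valuation $v_K^p$, and then to invoke Theorem~\ref{pdef}, handling the case $\zeta_p \notin K$ by a short detour.

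Since $v$ is henselian it is $p$-henselian, and $Kv \neq Kv(p)$ places $v \in H_1^p(K)$. The key observation is that any proper $p$-henselian refinement of $v$ would induce a non-trivial $p$-henselian valuation on $Kv$, contradicting the hypothesis that $Kv$ is not $p$-henselian. If $v_K^p \in H_2^p(K)$, then $v_K^p$ would be a strict refinement of $v$ (equality being impossible, as $v \in H_1^p(K)$), contradiction; so $v_K^p \in H_1^p(K)$. Being the finest element of $H_1^p(K)$, $v_K^p$ refines $v$, and the same observation forces $v_K^p = v$.

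It remains to define $v_K^p$ parameter-freely. When $\mathrm{char}(K) = p$ or $\zeta_p \in K$, Theorem~\ref{pdef} applies directly: case~(1) is the relevant one, since for $p = 2$ the assumption that $Kv$ is non-Euclidean gives $Kv_K^p = Kv$ non-Euclidean. The main obstacle is the case $p > 2$ with $\zeta_p \notin K$. Here I would pass to $L = K(\zeta_p)$, a $\emptyset$-interpretable finite extension of $K$ of degree $d \mid p-1$, hence coprime to $p$. Let $v'$ be the unique (henselian) extension of $v$ to $L$. The residue extension $Lv'/Kv$ is normal of degree dividing $d$, so coprime to $p$; any Galois $p$-extension of $Kv$ is then linearly disjoint from $Lv'$ over $Kv$, yielding $Lv' \neq Lv'(p)$. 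Moreover, Proposition~\ref{finite}(2) rules out $Lv'$ being $p$-henselian (else $Kv$ would be). Thus the hypotheses of the proposition hold on $(L,v')$, the first half of the proof gives $v' = v_L^p$, and Theorem~\ref{pdef} (applicable now since $\zeta_p \in L$) produces a parameter-free ring formula defining $\mathcal{O}_{v'}$ in $L$. Since $\mathcal{O}_v = \mathcal{O}_{v'} \cap K$ and $L$ is $\emptyset$-interpretable in $K$, this yields a parameter-free definition of $\mathcal{O}_v$ in $K$.
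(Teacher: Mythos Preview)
Your proof is correct and follows essentially the same route as the paper's: first identify $v$ with $v_K^p$ using that a proper $p$-henselian refinement of $v$ would induce a non-trivial $p$-henselian valuation on $Kv$, and that $Kv \neq Kv(p)$ forces $v \in H_1^p(K)$; then apply Theorem~\ref{pdef}, passing to the $\emptyset$-interpretable extension $K(\zeta_p)$ when necessary and checking the hypotheses persist there via Proposition~\ref{finite}(2) and the coprimality of $[K(\zeta_p):K]$ with $p$. The only cosmetic difference is the order in which you and the paper rule out the two possible locations of $v_K^p$ relative to $v$.
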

\begin{proof}  
Let $p$ and $(K,v)$ be as above. 
If $\mathrm{char}(K)\neq p$, we assume $\zeta_p \in K$ for now. 

Note that $K\neq K(p)$ (Proposition \ref{pex}). 
Thus, $K$ is $p$-henselian. We claim that $v_K^p = v$.
As $v$ is henselian, it is in particular 
$p$-henselian and
hence comparable to $v_K^p$.
Since $Kv$ is not $p$-henselian, $v_K^p$ is a coarsening of $v$, as 
otherwise $v_K^p$ would induce a $p$-henselian valuation on $Kv$ 
(\cite{EP05}, Corollary 4.2.7). 
Assume $v_K^p$ is a proper coarsening of $v$. Then we get $v \in H_2^p(K)$ 
and hence
$Kv = Kv(p)$, contradicting our assumption on $Kv$. This proves the claim.

For $p=2$, we get from our assumption that
$Kv_K^2=Kv$ is not Euclidean. Thus, $v_K^p$ 
is henselian and $\emptyset$-definable by Theorem \ref{pdef}. 

In case $\mathrm{char}(K)\neq p$ and 
$K$ does not contain a primitive $p$th root of unity, we consider
$K'=K(\zeta_p)$. As this is a $\emptyset$-definable extension of $K$, 
it suffices
to define the 
-- by henselianity unique -- prolongation $v'$ of $v$ to $K'$.
Since $K'v'$ is a finite normal extension of $Kv$ of degree at most $p-1$, 
it still
satisfies $K'v' \neq K'v'(p)$ and is furthermore not $p$-henselian
by Proposition \ref{finite}.
Now $v'$ is $\emptyset$-definable as above, and thus so is $v$.
\end{proof}

Morally speaking, the proposition says that if we have a henselian valued
field $(K,v)$ such that the residue field is `far away' from being henselian,
then $v$ is $\emptyset$-definable. Hence we will now consider well-known 
classes of examples of non-henselian fields and prove that any henselian
valuation with such a residue field is $\emptyset$-definable.

\begin{Ex} Let $k$ be a finite field. Then $G_k \cong \hat{\mathbb{Z}}$, 
in particular $k\neq k(p)$ holds
for all primes $p$. Note that $k$ is not Euclidean since
$\mathrm{char}(k) >0$. As $k$ admits no non-trivial valuations,
$k$ is also not $p$-henselian.
Now by Proposition \ref{notpdef}, 
if $(K,v)$ is a non-trivially henselian valued 
field with $Kv=k$,
then $v$ is $\emptyset$-definable.
\end{Ex}

Probably the best known example of a non-henselian field are the
rationals. One way of showing that the rationals admit no non-trivial henselian
valuation is via Hilbert's Irreducibility Theorem: No hilbertian field is
henselian (see Lemma 15.5.4 in \cite{FJ08}). 
We will now show by a similar proof that furthermore 
any henselian valued field with 
hilbertian residue field
satisfies the assumption of the
above proposition. 
First, we recall the definition of hilbertianity.
\begin{Def}
Let $K$ be a field and let $T$ and $X$ be variables.
Then $K$ is called \emph{hilbertian} if 
for every polynomial $f \in K[T,X]$ which is
separable, irreducible and monic when considered as a polynomial in $K(T)[X]$ 
there is some $a \in K$
such that $f(a,X)$ is irreducible in $K[X]$.
\end{Def}

Note that \emph{Hilbert's Irreducibility Theorem}
 states that $\mathbb{Q}$ is hilbertian.

Examples of hilbertian fields 
include all infinite finitely generated fields, in particular number
fields and function fields over finite fields.

\begin{Lem} \label{hilp}
If $K$ is a hilbertian field then $K \neq K(p)$ for any prime $p$.
Furthermore, $K$ is neither Euclidean nor $p$-henselian.
\end{Lem}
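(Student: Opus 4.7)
The plan is to handle the three assertions in turn, each time exploiting hilbertianity to produce enough finite extensions or irreducible specialisations.

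To see $K\neq K(p)$, it suffices to exhibit a single Galois extension of $K$ of degree $p$. In characteristic $p$, apply hilbertianity to the separable, irreducible Artin--Schreier polynomial $X^p-X-T\in K(T)[X]$ to obtain $a\in K$ with $X^p-X-a$ irreducible over $K$, producing a cyclic extension of degree $p$. In characteristic different from $p$ with $\zeta_p\in K$, the same argument works for the Kummer polynomial $X^p-T$. If $\zeta_p\notin K$, first pass to $K'=K(\zeta_p)$, still hilbertian as a finite separable extension of a hilbertian field, run the Kummer argument over $K'$ to obtain a cyclic $p$-extension $L/K'$, and then descend: the normal closure $\tilde L/K$ has Galois group $G=N\rtimes H$ with $N=\mathrm{Gal}(\tilde L/K')$ a non-trivial elementary abelian $p$-group and $H=\mathrm{Gal}(K'/K)$ of order coprime to $p$; for a generic Kummer generator, $N$ contains the trivial $H$-representation as a summand, yielding a Galois quotient of $G$ of order $p$ and hence a Galois $p$-extension of $K$.

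The non-Euclidean statement then follows similarly: hilbertianity applied to $X^2-T$ (or to $X^2-X-T$ in characteristic $2$) furnishes infinitely many pairwise non-isomorphic separable quadratic extensions of $K$, all contained in $K(2)$, so $[K(2):K]>2$.

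That $K$ is not $p$-henselian is the subtlest point, since $p$-henselianity is strictly weaker than henselianity and the classical result ``hilbertian implies not henselian'' (\cite{FJ08}, Lemma~15.5.4) does not apply verbatim. The plan is to imitate that proof using the Hensel-style characterisation of Proposition~\ref{phenseq}(4). Suppose for a contradiction that $v$ is a non-trivial $p$-henselian valuation on $K$, and (after passing to $K(\zeta_p)$ if necessary, which remains hilbertian and admits a $p$-henselian prolongation of $v$ since $[K(\zeta_p):K]$ is coprime to $p$) assume $\mathrm{char}(K)=p$ or $\zeta_p\in K$. Fix $\pi\in\mathfrak m_v\setminus\{0\}$ and choose a polynomial $f(T,X)\in\mathcal O_v[T,X]$ that is separable and irreducible in $K(T)[X]$, whose specialisations $f(a,X)$ split in $K(p)$ for every $a\in K$, and whose reduction $\overline{f(a,X)}$ has a simple root in $Kv$ for every $a\in\mathcal O_v$; in equi-characteristic $p$ one may take $f(T,X)=X^p-X-\pi T$, and in characteristic $0$ with $\mathrm{char}(Kv)\neq p$ one may take the Kummer polynomial $f(T,X)=X^p-(1+\pi T)$. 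Proposition~\ref{phenseq}(4) then forces $f(a,X)$ to be reducible over $K$ for every $a\in\mathcal O_v$; on the other hand, by the $v$-adic density of Hilbert sets (as used in the classical proof of \cite{FJ08}, Lemma~15.5.4), some $a\in\mathcal O_v$ makes $f(a,X)$ irreducible over $K$, the desired contradiction.

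The main obstacle lies in this last step: the polynomial $f$ must at once be irreducible over $K(T)$, have specialisations that split in $K(p)$ (so that Proposition~\ref{phenseq}(4) applies), and reduce modulo $\mathfrak m_v$ to something with a simple root. In mixed characteristic with $\mathrm{char}(Kv)=p$ the naive Kummer polynomial has inseparable reduction and so a more delicate substitute is needed; one must also verify that $p$-henselianity propagates along $K(\zeta_p)/K$ and that the hilbertianity contradiction can be localised inside the non-empty $v$-adic open set $\mathcal O_v$.
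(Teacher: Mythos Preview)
Your approach is essentially the paper's: the same use of the Hensel criterion Proposition~\ref{phenseq}(4) applied to a one-parameter family of degree-$p$ polynomials, combined with the $v$-adic density of Hilbert sets. The paper's polynomials are $X^p+mT-1$ (for $\mathrm{char}(K)\neq p$) and $X^p+X+mT-2$ (for $\mathrm{char}(K)=p$), matching yours up to sign, and the localisation of the Hilbert set to $\mathcal O_v$ is handled by citing \cite{FJ08}, Exercise~13.4.

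Two minor differences are worth noting. First, the paper dispatches $K\neq K(p)$ and non-Euclideanity by citing \cite{FJ08}, Corollary~16.3.6; your semidirect-product descent from $K(\zeta_p)$ is heavier machinery than needed and the step ``for a generic Kummer generator, $N$ contains the trivial $H$-representation'' is not justified as stated. Second, for the reduction to $\zeta_p\in K$ in the $p$-henselian argument, the paper invokes Proposition~\ref{finite}, whereas you assert that $p$-henselianity propagates \emph{up} along the prime-to-$p$ extension $K(\zeta_p)/K$; the latter is the direction actually needed for the contradiction, and you are right to list it among the points requiring verification.

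Finally, your caution about mixed characteristic is well placed: when $\mathrm{char}(K)=0$ and $\mathrm{char}(Kv)=p$, the paper's polynomial $X^p+mT-1$ also reduces to $(X-1)^p$ modulo $\mathfrak m_v$, so the published argument does not explicitly cover that case either.
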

\begin{proof} If $K$ is hilbertian, then $K$ is not Euclidean and 
$K\neq K(p)$ holds for any prime $p$
by Corollary 16.3.6 in \cite{FJ08}.
Let us first treat the case $\mathrm{char}(K) \neq p$.
We may then assume that $K$ contains a primitive $p$th root of unity
as $K(\zeta_p)$ is again hilbertian, and if $K(\zeta_p)$ was $p$-henselian
then so would be $K$ by Proposition \ref{finite}.

Let $v$ be a non-trivial valuation on $K$.
Choose $m \in {\mathfrak m}_v \setminus\{0\}$ 
and consider the irreducible polynomial
$f(T,X)= X^p +mT-1$ in $K(T)[X]$.
If $K$ is hilbertian, there exists an $a \in K^\times$ such that
$f(a,X)$ is irreducible in $K[X]$. Furthermore, 
by exercise 13.4 in \cite{FJ08}, $a$ may be chosen in $\mathcal{O}_v$.
But now $f(a,X)$ splits in $K(p)$, and has a simple zero in $Kv$. Hence
by Proposition \ref{phenseq}, $v$ cannot be $p$-henselian.

In case $\mathrm{char}(K)=p$,
the same argument as above applies to the polynomial
$f(T,X)=X^p+X+mT-2$.
\end{proof}

Combining Theorem \ref{pdef} with Lemma \ref{hilp}, we also get:
\begin{Cor}
Let $(K,v)$ be a henselian valued field such that $Kv$ is hilbertian.
Then $v$ is $\emptyset$-definable.
\end{Cor}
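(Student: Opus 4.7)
The plan is to invoke Proposition \ref{notpdef} directly, using Lemma \ref{hilp} to verify the residue-field hypotheses. First I would dispose of the trivial case: if $v$ itself is trivial, then ${\mathcal O}_v = K$ is cut out by the formula $x = x$ and is therefore $\emptyset$-definable. So from now on assume that $v$ is non-trivial, putting us in the setting where Proposition \ref{notpdef} can be applied.

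Next, pick any prime $p$; for concreteness take $p = 2$. Since $Kv$ is hilbertian by hypothesis, Lemma \ref{hilp} (applied to $Kv$) tells us that $Kv \neq (Kv)(p)$, that $Kv$ is not $p$-henselian, and that $Kv$ is not Euclidean. These are exactly the three conditions on the residue field demanded by Proposition \ref{notpdef}, so that proposition delivers a parameter-free definition of ${\mathcal O}_v$ in $K$, which is precisely the conclusion sought.

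There is no real obstacle to overcome; the corollary is a packaging of the two preceding results, and all the genuine work has already been done in Lemma \ref{hilp} (to handle the polynomial $X^p + mT - 1$, respectively $X^p + X + mT - 2$ in characteristic $p$, via Hilbert irreducibility) and in Proposition \ref{notpdef} (to show that under these residue-field conditions one necessarily has $v_K^p = v$, so that the definition of the canonical $p$-henselian valuation from Theorem \ref{pdef} defines $v$). One small cosmetic choice: using an odd prime $p$ would let us sidestep the Euclidean clause of Proposition \ref{notpdef} entirely, but since Lemma \ref{hilp} already rules out Euclideanness, the choice $p = 2$ works equally well.
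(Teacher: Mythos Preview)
Your argument is correct and matches the paper's own reasoning: the corollary is obtained by feeding the conclusions of Lemma~\ref{hilp} (applied to $Kv$) into Proposition~\ref{notpdef}, exactly as you do. The paper simply states this as an immediate combination without spelling out the details, so your write-up is if anything more explicit (including the handling of the trivial-valuation case).
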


\begin{Ex}
For any number field $K$ and any ordered abelian group $\Gamma$, the power
series valuation on $K((\Gamma))$ is $\emptyset$-definable. 
\end{Ex}

Another well-known class of fields which are not henselian are non-separably
closed PAC fields. As in general -- unlike hilbertian fields -- PAC fields 
do not need to admit any Galois extensions
of prime degree, we give a suitable generalization of Proposition 
\ref{notpdef}. 
Any
non-separably closed PAC field has a finite Galois extension which is still
PAC and which admits in turn Galois extensions of prime degree.
This motivates the following
\begin{Def}
Let $K$ be a field. We call $K$
\emph{virtually not $p$-henselian} if $p \mid \#G_K$ 
and there is some finite Galois extension
$L$ of $K$ with $L \neq L(p)$ such that $L$ is not $p$-henselian.
\end{Def}
Note that if $K \neq K(p)$, then $K$ is virtually not $p$-henselian iff it is
not
$p$-henselian by Proposition \ref{finite}.
We will now show a PAC field $K$ is virtually not $p$-henselian
for any prime $p$ with $p \mid \#G_K$. 
First, we show that a
PAC field $K$ with $K \neq K(p)$ is not $p$-henselian
using the same method as one uses to show that such a field is 
not henselian (see
\cite{FJ08}, Corollary 11.5.5).

\begin{Lem}[Kaplansky-Krasner for $p$-henselian valuations]
Assume that $(K,v)$ is a $p$-hen\-selian valued field
and take $f \in K[X]$ separable,
$\deg(f)>1$, such
that $f$ splits in $K(p)$.
Suppose for each $\gamma \in vK$ there exists some $x \in K$ such that
$v(f(x)) > \gamma$. Then $f$ has a zero in $K$. 
\label{KrasKap}
\end{Lem}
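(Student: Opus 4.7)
The plan is to adapt the classical Kaplansky--Krasner argument (cf.~\cite{EP05}) by replacing henselianity with $p$-henselianity; the splitting hypothesis on $f$ will be exactly what allows this substitution to work. The key observation is that $K(p)/K$ is Galois, $v$ extends uniquely to $K(p)$ by Proposition~\ref{phenseq}, and all roots of $f$ together with their $K$-conjugates are confined to $K(p)$, which is all that Krasner's lemma really needs.

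First, I would write $f(X)=c\prod_{i=1}^{n}(X-\alpha_{i})$ with $c\in K^{\times}$ and pairwise distinct roots $\alpha_{1},\dotsc,\alpha_{n}\in K(p)$ (using the separability of $f$). I denote the unique extension of $v$ to $K(p)$ by the same symbol $v$ and set
\[
\delta := \min_{i\neq j} v(\alpha_{i}-\alpha_{j}) \in vK(p).
\]
Since $v$ may be assumed non-trivial (a trivial $v$ already forces $f(x)=0$ for some $x\in K$) and since $vK(p)$ lies in the divisible hull of $vK$, I can select $\gamma\in vK$ with $\gamma>v(c)+n\delta$. By hypothesis, there is $x\in K$ with $v(f(x))>\gamma$, and expanding
\[
v(f(x)) = v(c) + \sum_{i=1}^{n} v(x-\alpha_{i})
\]
a pigeonhole step produces an index $i_{0}$ with $v(x-\alpha_{i_{0}})>\delta$.

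Next, I would show $\alpha_{i_{0}}\in K$ by a Krasner-style argument carried out entirely inside $K(p)$. Fix $\sigma\in\mathrm{Gal}(K(p)/K)$; then $\sigma(\alpha_{i_{0}})=\alpha_{j}$ for some $j$. Uniqueness of the extension of $v$ to $K(p)$ forces $v\circ\sigma = v$ on $K(p)$, and because $\sigma$ fixes $x\in K$,
\[
v(x-\alpha_{j}) = v\bigl(\sigma(x-\alpha_{i_{0}})\bigr) = v(x-\alpha_{i_{0}}) > \delta.
\]
The ultrametric inequality then gives
\[
v(\alpha_{i_{0}}-\alpha_{j}) \geq \min\bigl\{v(x-\alpha_{i_{0}}),\, v(x-\alpha_{j})\bigr\} > \delta,
\]
contradicting the definition of $\delta$ unless $j=i_{0}$. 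Hence $\alpha_{i_{0}}$ is fixed by the full Galois group $\mathrm{Gal}(K(p)/K)$, so $\alpha_{i_{0}}\in K$ is the desired root.

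The main subtlety is that the classical Kaplansky--Krasner proof uses uniqueness of the extension of $v$ throughout the separable closure, whereas here I only have uniqueness on $K(p)$. The splitting hypothesis on $f$ is precisely what renders this limitation harmless, since it keeps every root and every relevant conjugate inside $K(p)$; without it, $\sigma$ could send some $\alpha_{i_{0}}\in K(p)$ to a conjugate outside $K(p)$ where the Galois-invariance of $v$ would no longer be guaranteed.
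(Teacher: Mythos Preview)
Your proof is correct and follows essentially the same Krasner-type argument as the paper: both pick $x\in K$ with $v(f(x))$ large enough to force $v(x-\alpha_{i_0})$ to exceed all root differences, and then use Galois-invariance of the unique extension of $v$ to $K(p)$ to conclude $\alpha_{i_0}\in K$. The only differences are cosmetic---the paper reduces to the monic case and phrases the threshold via $\max_{i\neq j} v(\alpha_i-\alpha_j)$ rather than your $\delta=\min$, while you are slightly more explicit about the trivial-valuation edge case and about why one can choose $\gamma\in vK$ dominating $n\delta\in vK(p)$.
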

\begin{proof} 
Without loss of generality we may assume that $f$ is monic and that
$\deg(f)= n > 0$. Write 
$$f(X) = \prod\limits_{i=1}^{n} (X-x_i)$$
for $x_i \in K(p)$. Take
$\gamma > n \cdot \max\{v(x_i-x_j) \mid 1 \leq i < j \leq n\}$ and choose
$x \in K$ such that 
$$v(f(x)) = \sum\limits_{i=1}^{n} v(x-x_i) > \gamma.$$
Hence for some $j$ with $1 \leq j \leq n$ we get $v(x-x_j) > \gamma/n$. 
If $x_j \notin K$, then there is some $\sigma \in \mathrm{Gal}(K(p)/K)$
such that $\sigma(x_j) \neq x_j$. Thus, we get
$$v(x-\sigma(x_j))= v(\sigma(x - x_j)) = v(x-x_j) > \dfrac{\gamma}{n},$$
where the last equality holds as $v$ is $p$-henselian. Therefore
$$v(x_j - \sigma(x_j)) \geq \min\{v(x_j-x), v(x - \sigma(x_j))\} > 
\dfrac{\gamma}{n} 
$$
which contradicts the choice of $\gamma$. Hence we conclude
$x_j \in K$, so $f$ has a zero in $K$.
\end{proof}

\begin{Lem} Let $K$ be a field and $p$ a prime. \label{PAC}
If $K$ is PAC and $p$-henselian, then we have $K=K(p)$.
\end{Lem}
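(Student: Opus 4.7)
I plan to argue by contradiction. Assume $K$ is PAC and $p$-henselian, so $K$ carries some non-trivial $p$-henselian valuation $v$, and suppose that $K\neq K(p)$. Since $K(p)/K$ is a non-trivial pro-$p$ Galois extension, it contains a Galois subextension $L/K$ of degree $p$; let $\alpha\in L$ be a generator and $f\in K[X]$ its minimal polynomial. Then $f$ is monic, separable, irreducible of degree $p$, and splits completely in $L\subseteq K(p)$.

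The strategy is to apply the $p$-henselian Kaplansky--Krasner lemma (Lemma~\ref{KrasKap}) to $f$, which will produce a root of $f$ in $K$ and contradict the irreducibility of $f$ in degree $p>1$. For this I need to verify the hypothesis of Lemma~\ref{KrasKap}: for every $\gamma\in vK$, exhibit $x\in K$ with $v(f(x))>\gamma$. Since $v$ is $p$-henselian, it extends uniquely to a valuation $w$ on the Galois extension $L$. Writing $f(X)=\prod_{i=1}^{p}(X-\alpha_i)$ over $L$ with $\alpha_1=\alpha$, any $x\in K$ satisfying $w(x-\alpha)>\max_{i\geq 2}w(\alpha-\alpha_i)$ gives $w(f(x))=w(x-\alpha)+C$ for the constant $C=\sum_{i\geq 2}w(\alpha-\alpha_i)$. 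Hence it suffices to approximate $\alpha$ arbitrarily well by elements of $K$ in the $w$-topology.

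This approximation is precisely what the PAC hypothesis provides, via the same curve-point scheme used by Fried and Jarden to show that a non-separably-closed PAC field is not henselian (\cite{FJ08}, Corollary~11.5.5): for each target $\gamma$ one constructs an absolutely irreducible $K$-variety built from $f$ and a parameter of large $v$-value whose $K$-rational points (guaranteed by PAC) yield $x\in K$ with $w(x-\alpha)$ sufficiently large. The main obstacle is adapting this variety construction to the $p$-henselian context and verifying absolute irreducibility using the fact that $f$ is separable of prime degree $p$, hence not a non-trivial power in $\bar K[X]$; the argument parallels the classical henselian one almost verbatim, using that $v$ (and hence $w$) already extends uniquely across a single degree-$p$ Galois extension rather than across all of $K^{\text{sep}}$. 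Once $\alpha$ is shown to be $w$-approximable by elements of $K$, Lemma~\ref{KrasKap} closes the argument and forces $K=K(p)$.
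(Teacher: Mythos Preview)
Your proposal follows essentially the same route as the paper's proof: reduce to the $p$-henselian Kaplansky--Krasner lemma and verify its hypothesis by exploiting PAC through an absolutely irreducible curve built from $f$. The paper makes this last step concrete by taking, for each $c\in K^\times$, the curve $g(X,Y)=f(X)f(Y)-c^2$; Eisenstein with respect to a linear factor of $f(Y)$ over $K^{\mathrm{sep}}[Y]$ gives absolute irreducibility, and a $K$-point $(x,y)$ immediately forces $v(f(x))\geq v(c)$ or $v(f(y))\geq v(c)$.

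One small remark: your intermediate step of approximating $\alpha$ itself in the $w$-topology is an unnecessary detour. The Fried--Jarden curve you invoke delivers $v(f(x))$ large directly, which is exactly what Lemma~\ref{KrasKap} asks for; it does not obviously single out $\alpha$ rather than some conjugate $\alpha_i$. Dropping that reformulation and feeding $v(f(x))>\gamma$ straight into Lemma~\ref{KrasKap}, as the paper does, tightens the argument and avoids having to argue back from ``close to some root'' to ``close to $\alpha$''.
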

\begin{proof} Assume that $K$ is PAC and $p$-henselian. We show that 
$K=K(p)$ holds. 
Take $f \in K[X]$ a separable, irreducible polynomial
with $\deg(f) >1$ splitting in $K(p)$.
It suffices to
show that for all $c \in K^\times$ there exists an $x \in K$ such that
$v(f(x)) \geq v(c)$, as then $f$ has a zero in $K$.

Consider the curve $g(X,Y)=f(X)f(Y)-c^2$. Consider $g(X,Y)$ as a polynomial
over $K^{sep}[Y]$. Eisenstein's criterion 
(\cite{FJ08}, Lemma 2.3.10(b)) applies over this ring
to any linear factor of $f(Y)$, thus $g(X,Y)$ is absolutely irreducible.
As $K$ is PAC, there exist $x, y \in K$ such that $f(x)f(y)=c^2$.
Thus, either $v(f(x))\geq v(c)$ or $v(f(y)) \geq v(c)$ holds.
\end{proof}

As being PAC passes up to algebraic extensions,
any PAC field $K$ is in particular not virtually $p$-henselian
for all primes $p \mid \#G_K$. Furthermore, as real closed fields are not
PAC, no PAC field is Euclidean.

We now give a stronger version of Proposition \ref{notpdef}. The main difference
is that the we drop the assumption on the residue field to admit a Galois 
extension of $p$-power degree for some prime $p$.
\begin{Thm} \label{def1}
Let $(K,v)$ be a non-trivially henselian valued field with 
$p \mid \#G_{Kv}$, and
if $p=2$ assume that $Kv$ is not Euclidean.
If $Kv$ is virtually not $p$-henselian 
then $v$ is $\emptyset$-definable on $K$.
\end{Thm}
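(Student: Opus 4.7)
The plan is to reduce to Proposition \ref{notpdef} by lifting the residue-field witness of virtual non-$p$-henselianity to a finite Galois extension $M/K$. By hypothesis, fix a finite Galois extension $L/Kv$ with $L \neq L(p)$ and $L$ not $p$-henselian. When $p=2$, I would first argue that $L$ can be chosen to be non-Euclidean, using that $Kv$ itself is non-Euclidean: if every such witness were Euclidean, then $[L(2):L]=2$ together with the inclusion $L \cdot Kv(2) \subseteq L(2)$ would give $[Kv(2) : Kv(2) \cap L] \leq 2$, so $[Kv(2):Kv]$ would be finite; the classical fact that a field $F$ with $[F(2):F]$ finite and $F \neq F(2)$ must already be Euclidean (an Artin-type argument on 2-Sylows of $G_F$) would then force $Kv$ itself to be Euclidean, contradicting the hypothesis.

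Given a suitable $L$, I would lift it as follows. Take a primitive element $\bar\alpha$ of $L/Kv$ with separable minimal polynomial $\bar g \in Kv[X]$, choose any monic lift $g \in \mathcal{O}_v[X]$, and apply Hensel's lemma in $(K,v)$ to produce a root $\alpha \in K^{\mathrm{sep}}$ of $g$ with residue $\bar\alpha$. Setting $M := K(\alpha)$, standard henselian theory yields $[M:K]=[L:Kv]$; the unique prolongation $w$ of $v$ to $M$ is henselian with $Mw=L$ and $wM=vK$; and $M/K$ is Galois, since all $\mathrm{Gal}(L/Kv)$-conjugates of $\bar\alpha$ in $L$ lift uniquely via Hensel to roots of $g$ inside $M$.

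Now $(M,w)$ is a non-trivially henselian valued field with $Mw=L$ not $p$-henselian, $L \neq L(p)$, and (when $p=2$) non-Euclidean, so Proposition \ref{notpdef} produces a parameter-free $\mathcal{L}_{\mathrm{ring}}$-formula $\psi(x)$ defining $\mathcal{O}_w$ in $M$. Since $M/K$ is a finite Galois extension arising from a Hensel lift, $M$ is $\emptyset$-interpretable in $K$ via existential quantification over monic lifts $g$ of degree $n := [L:Kv]$ whose reductions cut out an extension of the required type; any two such lifts give $K$-isomorphic copies of $M$ with the same pulled-back subring $\mathcal{O}_w \cap K = \mathcal{O}_v$, so composing $\psi$ with this interpretation yields a parameter-free definition of $\mathcal{O}_v$ in $K$. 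The main obstacle throughout is the first step — producing a non-Euclidean witness $L$ when $p=2$ — which is the only place the non-Euclidean hypothesis on $Kv$ is genuinely needed.
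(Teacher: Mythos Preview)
Your overall strategy matches the paper's: lift the witness $L/Kv$ to a finite Galois extension $M/K$, apply Proposition~\ref{notpdef} to $(M,w)$ to get $w=v_M^p$ defined by a parameter-free formula, then restrict to $K$. The construction of $M$ via a Hensel lift is fine. The genuine gap is in the final step, eliminating parameters.

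You claim that $M$ is $\emptyset$-interpretable in $K$ ``via existential quantification over monic lifts $g$ of degree $n$ whose reductions cut out an extension of the required type.'' But speaking of the \emph{reduction} of $g$ already presupposes knowledge of $\mathcal{O}_v$, which is exactly what you are trying to define; so this is circular. If instead you mean to quantify over all degree-$n$ Galois extensions $M/K$ satisfying some first-order conditions, then it is simply false that ``any two such lifts give $K$-isomorphic copies of $M$'': distinct Galois extensions of degree $n$ need not be isomorphic, and their canonical $p$-henselian valuations need not restrict to the same thing on $K$. The paper handles this by taking the \emph{intersection} $\bigcap(\mathcal{O}_{v_M^p}\cap K)$ over all $M$ with $[M:K]=n$, $M\neq M(p)$, $\zeta_p\in M$, and $v_M^p\in H_1^p(M)$ (this last condition is $\emptyset$-definable by Corollary~\ref{uni}); one then has to argue that for \emph{every} such $M$ the prolongation $w$ of $v$ coincides with $v_M^p$, not just for the particular lift you built. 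That argument is where the conditions on $M$ are actually used, and it is missing from your proposal.

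A smaller point: your non-Euclidean argument for $L$ when $p=2$ breaks down precisely in the relevant case $Kv=Kv(2)$, since then your chain of inequalities yields only $[Kv(2):Kv]=1$ and the ``classical fact'' you invoke requires $F\neq F(2)$. The conclusion is nevertheless true, for a simpler reason: if $Kv=Kv(2)$ then every element of $Kv$ is a square (char $\neq 2$) so $i\in Kv\subseteq L$ and $L$ cannot be orderable, let alone Euclidean; and in characteristic $2$ no field is Euclidean. The paper just asserts this in one line.
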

\begin{proof} If $Kv$ is virtually not $p$-henselian and 
$Kv \neq Kv(p)$, then $v$ 
is $\emptyset$-definable by Proposition \ref{notpdef}.

In case $Kv = Kv(p)$, by assumption there is a $p$-henselian finite
Galois extension $L$ of $Kv$ with
$L \neq L(p)$. 
As $Kv$ is not Euclidean,
$L$ is also not Euclidean. By Proposition \ref{finite}, we may assume
that $L$ contains a primitive $p$th root of unity in case 
$\mathrm{char}(Kv) \neq p$. Let $[L:Kv]=n$.

Consider any finite Galois extension $M$ of $K$, with $w$ the unique 
prolongation of $v$ to $M$ such that $Mw=L$ holds.
As before, $w$ is $\emptyset$-definable on $M$
(since $w = v_M^p$ as in the proof of Proposition \ref{notpdef}) and
hence, by interpreting $M$ in $K$ using parameters,
so is its restriction $v$ to $K$.

Thus, it remains to show that a definition can be found without parameters.
The interpretation of Galois extensions of a fixed degree of $K$ can be done
uniformly with respect to the parameters (namely the coefficients of a
minimal polynomial generating the extension). By Theorem \ref{pdef}, 
the
definition of the $p$-henselian valuations on these 
can also be done
uniformly. To make sure that the residue field of the canonical
$p$-henselian valuation of a finite Galois extension of $K$ corresponds
to a field $L$ as described above, we need to restrict to extensions
$M$ of $K$ with $v_p^M \in H_1^p(M)$. 
By Corollary \ref{uni}, 
this is a $\emptyset$-definable condition.
Hence we get
the desired definition by
\begin{multline*} \bigcap \Big({\mathcal O}_{v_M^p} \cap K \;\Big|\;
K \subseteq M \textrm{ Galois}, \,[M:K]=n,\,M \neq M(p), \,
M \textrm{ not }p\textrm{-henselian,}\\ 
\zeta_p \in M \textrm{ if }
\mathrm{char}(M) \neq p,\,
v_M^p \in H_1^p(M) \Big).
\end{multline*}
\end{proof}

As an immediate consequence, we have the following
\begin{Cor}
Let $p$ be a prime and let $K$ be a field such that $p \mid \#G_K$ and that
$K$ is virtually not $p$-henselian. If 
$p=2$, assume that $K$ is not Euclidean. Then the power series
valuation is $\emptyset$-definable on $K((\Gamma))$, for any ordered abelian group 
$\Gamma$. 
\end{Cor}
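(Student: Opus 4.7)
The plan is to apply Theorem \ref{def1} directly to the henselian valued field $(K((\Gamma)), v_\Gamma)$, where $v_\Gamma$ denotes the power series valuation. The key observation is that $v_\Gamma$ is henselian (the field of generalized power series is maximally complete, in particular henselian) and its residue field is exactly $K$.

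First, I would dispose of the trivial case $\Gamma = 0$: then $K((\Gamma)) = K$ and the power series valuation is trivial, so its valuation ring is all of $K$ and is $\emptyset$-definable by the formula $x = x$. So assume $\Gamma \neq 0$, which makes $v_\Gamma$ non-trivial.

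Next, I would verify the hypotheses of Theorem \ref{def1} for the valued field $(K((\Gamma)), v_\Gamma)$ with residue field $K((\Gamma))v_\Gamma = K$: by assumption $p \mid \#G_K$, $K$ is virtually not $p$-henselian, and if $p = 2$ then $K$ is not Euclidean. These are exactly the conditions imposed on the residue field in Theorem \ref{def1}. Hence that theorem yields a parameter-free $\mathcal{L}_{\mathrm{ring}}$-formula defining $\mathcal{O}_{v_\Gamma}$ in $K((\Gamma))$.

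There is no real obstacle here, since the corollary is an immediate specialisation of Theorem \ref{def1} once one notes that $(K((\Gamma)), v_\Gamma)$ is a non-trivially henselian valued field with residue field $K$. The only point deserving a brief sanity check is henselianity of the power series valuation, which is standard.
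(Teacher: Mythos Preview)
Your proposal is correct and matches the paper's approach exactly: the paper states this corollary as an immediate consequence of Theorem~\ref{def1}, and you have simply spelled out the verification that $(K((\Gamma)), v_\Gamma)$ satisfies its hypotheses with residue field $K$. The handling of the degenerate case $\Gamma = 0$ is a harmless bonus.
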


Combining Theorem \ref{def1} with Lemma \ref{PAC}, we get:
\begin{Cor}
Let $(K,v)$ be a henselian valued field such that $Kv$ is PAC and
not separably closed.
Then $v$ is $\emptyset$-definable.
\end{Cor}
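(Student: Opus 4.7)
The plan is to reduce directly to Theorem \ref{def1} via Lemma \ref{PAC}, so the task splits into verifying the hypotheses of Theorem \ref{def1}: we need a prime $p$ with $p \mid \#G_{Kv}$ such that $Kv$ is virtually not $p$-henselian, and (in case $p=2$) such that $Kv$ is not Euclidean.

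First I would pick the prime. Since $Kv$ is PAC and not separably closed, $G_{Kv}$ is a non-trivial profinite group, so some prime $p$ divides $\#G_{Kv}$. By definition of the profinite order, this gives a finite Galois extension $L/Kv$ whose degree is a positive power of $p$; enlarging if necessary we may arrange $L \neq L(p)$ (take $L$ to be a Galois closure of a maximal proper subextension of $Kv(p)/Kv$ of $p$-power degree, or equivalently, iterate: any strictly ascending chain inside $Kv(p)$ must terminate at some finite stage where no further degree-$p$ extension exists if $L = L(p)$, contradicting $Kv \neq Kv(p)$).

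Next I would show $L$ is not $p$-henselian. Algebraic extensions of PAC fields are again PAC (a standard fact), so $L$ is PAC. Since $L \neq L(p)$, Lemma \ref{PAC} (in its contrapositive form) yields that $L$ cannot be $p$-henselian. This is exactly what is needed to conclude that $Kv$ is virtually not $p$-henselian.

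For the Euclidean caveat (only relevant when $p=2$), the remark in the paragraph preceding this corollary states that no PAC field is Euclidean; since $Kv$ is PAC, $Kv$ is not Euclidean. All hypotheses of Theorem \ref{def1} are now in place, and we conclude that $v$ is $\emptyset$-definable on $K$. The only non-mechanical step is the first one --- ensuring one can choose a finite Galois extension $L$ with both $p$-power degree and $L \neq L(p)$ --- but this is immediate from the fact that $Kv \neq Kv(p)$, so I do not expect a serious obstacle anywhere in the argument; the corollary is essentially a bookkeeping exercise combining the two cited results.
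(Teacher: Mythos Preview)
Your approach---reduce to Theorem~\ref{def1} via Lemma~\ref{PAC}, using that algebraic extensions of PAC fields remain PAC and that PAC fields are never Euclidean---is exactly the paper's; its proof is literally the single clause ``Combining Theorem~\ref{def1} with Lemma~\ref{PAC}''.

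There is, however, a genuine slip in your first step. From $p \mid \#G_{Kv}$ you conclude that $Kv$ has a finite \emph{Galois} extension of $p$-power degree; this does not follow. The hypothesis $p \mid \#G_{Kv}$ only guarantees a finite Galois extension $M/Kv$ with $p \mid [M:Kv]$, not one of pure $p$-power degree (equivalently, it does not give $Kv \neq Kv(p)$). For instance, a profinite group can have $A_5$ as a quotient---so $5$ divides its order---without having any quotient of $5$-power order. Your parenthetical repair (chains inside $Kv(p)/Kv$, ``contradicting $Kv\neq Kv(p)$'') silently assumes $Kv \neq Kv(p)$, which is precisely the point at issue. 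The paper handles this by asserting, in the paragraph just before the corollary, that any non-separably-closed PAC field admits a finite Galois extension $L$ with $L \neq L(p)$ for \emph{some} prime $p$; once that is granted, the rest of your argument goes through verbatim. So the fix is to invoke that fact (which genuinely uses the PAC hypothesis, via projectivity of $G_{Kv}$) rather than to pick an arbitrary prime dividing $\#G_{Kv}$---this is where the PAC assumption is doing real work, and it is not quite the bookkeeping you describe.
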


Another application of Theorem \ref{def1} are henselian valued fields with 
simple residue fields. We call a field \emph{simple} if $\mathrm{Th}(K)$ 
is simple in the sense of Shelah (see \cite{Wa00} for some background on 
simplicity). 
In a simple theory, no  
orderings with infinite chains are interpretable.
Thus, no simple field admits a definable 
valuation. Hence, by Theorem \ref{pdef}, 
simple fields cannot be $p$-henselian for any
prime $p$. As all Galois extensions of a simple field are interpretable in $K$
and thus again simple, any non-separably closed simple field $K$ is
not virtually $p$-henselian for any $p$ with $p \mid \#G_K$.
Thus, we get the following
\begin{Cor}
Let $(K,v)$ be a henselian valued field such that $Kv$ is simple and not
separably closed. Then $v$ is $\emptyset$-definable.
\end{Cor}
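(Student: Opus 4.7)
The plan is to deduce the corollary from Theorem \ref{def1} by exhibiting a prime $p$ such that $p \mid \#G_{Kv}$, $Kv$ is not Euclidean (this is the condition needed when $p=2$), and $Kv$ is virtually not $p$-henselian.

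The non-Euclideanity of $Kv$ is immediate: in a Euclidean field the positive cone coincides with the non-zero squares, giving a definable linear order with infinite chains, which is forbidden in any simple theory. The same argument rules out Euclideanity of any field interpretable in $Kv$. Since $Kv$ is not separably closed, some prime $p$ divides the degree of a non-trivial finite Galois extension of $Kv$, equivalently $Kv \neq Kv(p)$, i.e.\ $p \mid \#G_{Kv}$; I would fix such a $p$.

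The heart of the proof is to produce the Galois extension required by the definition of \emph{virtually not $p$-henselian}. I would set
\[
L := \begin{cases} Kv(\zeta_p) & \text{if } \mathrm{char}(Kv) \neq p, \\ Kv & \text{otherwise.} \end{cases}
\]
Then $L/Kv$ is finite Galois with $[L:Kv]$ coprime to $p$, which forces $L \cap Kv(p) = Kv$ (this intersection is Galois over $Kv$ of both $p$-power and coprime-to-$p$ degree), and hence $L \cdot Kv(p)$ is a non-trivial pro-$p$ extension of $L$, so $L \neq L(p)$. Being interpretable in $Kv$ (with parameters), $L$ is again simple, and by the observation above it is non-Euclidean. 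Now suppose for contradiction that $L$ were $p$-henselian; then $v_L^p$ would be non-trivial, and Theorem \ref{pdef} would define its valuation ring without parameters (the hypotheses $\zeta_p \in L$ or $\mathrm{char}(L)=p$ and $L$ non-Euclidean are all in place). This gives a $\emptyset$-definable non-trivial valuation on $L$, contradicting simplicity.

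Thus $L$ witnesses that $Kv$ is virtually not $p$-henselian, and Theorem \ref{def1} applies to yield $\emptyset$-definability of $v$. The main delicate point is the passage to $L = Kv(\zeta_p)$, which serves both to supply the required root of unity for Theorem \ref{pdef} and to preserve $L \neq L(p)$; the remainder is the stability of simplicity under interpretation together with the observation that Euclideanity is incompatible with simple theories.
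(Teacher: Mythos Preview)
Your approach is the paper's: deduce the corollary from Theorem~\ref{def1} by showing that $Kv$ is virtually not $p$-henselian, using that a simple field (and each of its finite Galois extensions, which stay simple by interpretability) cannot carry a non-trivial definable valuation and hence, via Theorem~\ref{pdef}, cannot be $p$-henselian once $\zeta_p$ is present. Your explicit passage to $L = Kv(\zeta_p)$ is in fact more careful than the paper's one-line discussion.

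One slip should be corrected. The equivalence you assert---``some prime $p$ divides the degree of a non-trivial finite Galois extension of $Kv$, equivalently $Kv \neq Kv(p)$''---only holds in one direction: from $Kv \neq Kv(p)$ one gets $p \mid \#G_{Kv}$, but a finite Galois group can have order divisible by $p$ without a normal subgroup of $p$-power index (e.g.\ $A_5$ with $p=5$), so $p \mid \#G_{Kv}$ does not force $Kv \neq Kv(p)$. Your argument for $L \neq L(p)$ genuinely uses $Kv(p) \supsetneq Kv$ (otherwise $L\cdot Kv(p)=L$ and nothing is gained), so you must choose $p$ with this stronger property from the outset rather than merely with $p \mid \#G_{Kv}$. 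The paper's text preceding the corollary is equally informal on this existence point; once you phrase the choice of $p$ correctly, your argument and the paper's coincide.
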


\subsection*{Real closed and separably closed residue fields}
In all our definitions of henselian valuations we showed so far that a given
henselian valuation $v$ on a field $K$ 
coincided with both the canonical henselian
valuation $v_K$ and the canonical $p$-henselian valuation
$v_K^p$ for some prime $p$. However, it can happen that some
 $v_K^p$ is henselian, but a proper coarsening of a given 
henselian valuation $v$.
In this case $v_K^p$ is again henselian and $\emptyset$-definable.
An example for this are henselian valued fields with 
separably closed residue field:
\begin{Thm} \label{H2}
Let $K$ be a field which is not separably closed. 
Assume that $K$ is henselian with respect
to a valuation with separably closed residue field.
Then $K$ admits a non-trivial $\emptyset$-definable henselian valuation.
\end{Thm}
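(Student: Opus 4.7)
The plan is to produce the desired valuation as the canonical $p$-henselian valuation $v_K^p$ for a suitably chosen prime $p$, possibly after passing to the $\emptyset$-interpretable extension $K(\zeta_p)$. Since $K$ is not separably closed, $G_K$ is non-trivial, so I may pick a prime $p$ with $K \neq K(p)$; this will guarantee $v_K^p$ is non-trivial.

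The key point to establish is that $v_K^p$ is henselian. The given valuation lies in $H_2(K)$ (as its residue field is separably closed), so the canonical henselian valuation $v_K$, being the coarsest member of $H_2(K)$, also lies in $H_2(K)$, and hence $Kv_K$ is separably closed. In particular $Kv_K = Kv_K(p)$, which says $v_K$ is $p$-henselian with $p$-closed residue field, i.e.\ $v_K \in H_2^p(K)$. Thus $H_2^p(K)$ is non-empty and $v_K^p$ is its coarsest element, so $v_K^p$ is a coarsening of $v_K$. By Observation \ref{coarse}, $v_K^p$ is henselian.

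To actually invoke Theorem \ref{pdef} I still need $\zeta_p \in K$ when $\mathrm{char}(K) \neq p$. If this fails, I would follow the strategy used in Proposition \ref{notpdef} and pass to $K' = K(\zeta_p)$, which is $\emptyset$-interpretable in $K$, remains henselian, and still has separably closed residue field (a normal algebraic extension of a separably closed field is purely inseparable, hence again separably closed). Applying the argument above to $K'$ yields a non-trivial henselian $v_{K'}^p$ to which Theorem \ref{pdef} supplies a parameter-free definition; its restriction to $K$ is henselian by Theorem \ref{down} and non-trivial because a non-trivial henselian valuation on a finite extension restricts non-trivially. The Euclidean caveat in Theorem \ref{pdef} never arises here: $v_K^p \in H_2^p(K)$ forces $Kv_K^p$ to be $p$-closed, and for $p=2$ this residue field has no quadratic extension and so cannot be Euclidean. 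I expect the main obstacle to be purely bookkeeping around the descent through $K(\zeta_p)$, but this mirrors precisely the corresponding step in the proof of Proposition \ref{notpdef}.
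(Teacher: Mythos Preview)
Your argument is essentially the paper's, but there is one genuine gap at the very first step. You write that since $G_K$ is non-trivial you ``may pick a prime $p$ with $K \neq K(p)$''. This amounts to asserting that $G_K$ has an open normal subgroup of prime index, i.e.\ that $G_K$ has a non-trivial abelian quotient. For general profinite groups this can fail (an infinite product of non-abelian finite simple groups has no such subgroup), and for absolute Galois groups it is certainly not something one may invoke without argument; it needs the specific hypothesis of the theorem.

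The paper supplies exactly this missing step: because $Kv_K$ is separably closed, the exact sequence $I_w \to G_K \to G_{Kv_K}$ (\cite{EP05}, Theorem 5.2.7) identifies $G_K$ with the inertia group $I_w$, and inertia groups are pro-soluble (\cite{EP05}, Lemma 5.3.2). A non-trivial pro-soluble group has a non-trivial finite soluble quotient, hence a quotient of prime order, and so the required prime $p$ with $K \neq K(p)$ exists. Once this is inserted, the remainder of your proof --- that $v_K \in H_2^p(K)$, hence $v_K^p$ coarsens $v_K$ and is henselian, the descent through $K(\zeta_p)$, and the disposal of the Euclidean case --- coincides with the paper's argument.
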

\begin{proof}
We show first that $G_K$ is pro-soluble. If $K$ is henselian with
respect to a valuation with separably closed residue field, then
$v_K$ has also separably closed residue field. Let $w$ be the prolongation
of $v_K$ to $K^{sep}$. Recall that there is an exact sequence
$$ I_{w} \longrightarrow G_K \longrightarrow G_{Kv_K}$$
where $I_w$ denotes the inertia group of $w$ over $K$ (see 
\cite{EP05}, Theorem 5.2.7). Hence, as $I_w$ is pro-soluble 
(see \cite{EP05}, Lemma 5.3.2), so is $G_K$. 

Thus, there is some prime $p$ with $K \neq K(p)$.
But now $v_K^p$ is indeed a (not necessarily proper) 
coarsening of $v_K$: Otherwise, the definition of 
$v_K^p$ would imply $Kv_K \neq Kv_K(p)$.
If $K$ contains a
primitive $p$th root of unity or $\mathrm{char}(K)=p$, then $v_K^p$ is 
$\emptyset$-definable and henselian. Else, we consider the
$\emptyset$-definable extension $K(\zeta_p)$.
Then the canonical henselian valuation on $K(\zeta_p)$
still has separably closed residue field, therefore $v_{K(\zeta_p)}^p|_K$
gives a $\emptyset$-definable henselian valuation on $K$.
\end{proof}

\begin{Cor} \label{real}
Let $K$ be a field and assume that $K$ is not real closed. 
If $K$ is henselian with respect
to a valuation with real closed residue field,
then $K$ admits a non-trivial $\emptyset$-definable henselian valuation.
\end{Cor}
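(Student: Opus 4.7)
The plan is to derive Corollary~\ref{real} from Theorem~\ref{H2} by passing to the $\emptyset$-interpretable quadratic extension $K(i)$, in which the residue field becomes algebraically closed. First I would observe that $i\notin K$: if $a\in K$ satisfied $a^2=-1$, then $v(a)=0$ and $\overline{a}^2=-1$ in $Kv$, contradicting the fact that $-1$ is not a square in the real closed field $Kv$. Hence $K(i)=K[X]/(X^2+1)$ is a proper quadratic extension of $K$, $\emptyset$-interpretable over $K$; the unique prolongation $w$ of $v$ to $K(i)$ has residue field $Kv(i)$, which is algebraically closed; and $K(i)$ is itself not separably closed, since otherwise Artin--Schreier would force $K$ to be real closed.

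Next I would apply Theorem~\ref{H2} to $(K(i),w)$ to obtain a non-trivial $\emptyset$-definable henselian valuation $u$ on $K(i)$. To push $u$ down to $K$ via Theorem~\ref{down}, I need that $u$ is actually a coarsening of $v_{K(i)}$. This is not stated in Theorem~\ref{H2} but follows from its proof: the valuation constructed there is either $v_{K(i)}^p$ (coarsening $v_{K(i)}$ by Observation~\ref{coarse}) or the restriction of $v_{K(i)(\zeta_p)}^p$ to $K(i)$ (which coarsens $v_{K(i)}$ by a further application of Theorem~\ref{down} inside the extension $K(i)(\zeta_p)/K(i)$).

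Theorem~\ref{down} applied to $K\subseteq K(i)$ then shows that $u|_K$ is a coarsening of $v_K$ and hence henselian on $K$. Non-triviality of $u|_K$ is standard: if $K\subseteq\mathcal{O}_u$ then, since $K(i)/K$ is integral and $\mathcal{O}_u$ is integrally closed in its fraction field $K(i)$, we would get $\mathcal{O}_u=K(i)$, contradicting the non-triviality of $u$. The $\emptyset$-definability of $u|_K$ in $K$ transfers via the $\emptyset$-interpretation of $K(i)$ in $K$. The main subtle point in this plan, essentially the only place where Theorem~\ref{H2} is not used as a black box, is the need to inspect its proof to verify that the valuation it produces is genuinely a coarsening of $v_{K(i)}$, which is the hypothesis that Theorem~\ref{down} requires.
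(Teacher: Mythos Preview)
Your approach is essentially the same as the paper's: pass to the $\emptyset$-interpretable extension $K(i)$, apply Theorem~\ref{H2}, and restrict back to $K$. Your execution is in fact more careful than the paper's rather elliptical last sentence---you explicitly justify $i\notin K$, that $K(i)$ is not separably closed, and (via Theorem~\ref{down} and inspection of the proof of Theorem~\ref{H2}) that the restricted valuation is non-trivial and henselian on $K$, points the paper leaves implicit.
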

\begin{proof} 
If $(K,v)$ is henselian and $Kv$ is real closed, consider the unique
prolongation $w$ of $v$ to $L=K(i)$. The residue field $Lw$ is separably closed,
so L admits a $\emptyset$-definable henselian valuation by Theorem \ref{H2}.
As $v$ is the restriction of $w$ to $K$, $v$ is also $\emptyset$-definable on
$K$.
\end{proof}

\subsection{Henselian fields with non-universal absolute Galois groups}
In this section, we will give a Galois-theoretic condition
to ensure the existence of 
a non-trivial 
$\emptyset$-definable henselian valuation on a henselian field.

The following 
group-theoretic definition is taken from 
\cite{NS07}.
\begin{Def} 
Let $G$ be a profinite group.
We say that $G$ is \emph{universal} if every finite group occurs as a 
continuous subquotient of $G$.
\end{Def}
Note that for a field $K$, $G_K$ is non-universal iff there is some
$n \in \mathbb{N}$ such that the symmetric
group $S_n$ does not occur as a Galois group over any finite Galois
extension of $K$ (and then no $S_m$ with $m\geq n$ will occur).
The connection between non-universal absolute Galois groups and
henselianity is given by
the following statement:
\begin{Thm}[\cite{Koe05}, Theorem 3.1] \label{prod}
Let $K$ be a field and let $L$ and $M$ be
algebraic extensions of $K$ which both carry
non-trivial
henselian valuations. Assume further that $G_L$ is non-trivial pro-$p$ and $G_M$
non-trivial pro-$q$ for primes $p<q$. Let $v$ and $w$ be (not necessarily
proper) coarsenings of the canonical
henselian valuations on $L$ and $M$ respectively, and, if $p=2$ and $Lv$
is real closed, assume $v$ to be the coarsest henselian valuation on $L$
with real closed residue field. Then either $G_K$ 
is universal or $v|_K$ and $w|_K$ are comparable
and the coarser valuation is henselian on $K$.
\end{Thm}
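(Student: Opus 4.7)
I would prove this by contrapositive: assume $G_K$ is non-universal, and show that $v|_K,w|_K$ are comparable and the coarser is henselian on $K$. The two central tools are F.K.\ Schmidt's theorem (Theorem \ref{fk}), which forces any two henselian valuations on a non-separably-closed field to be comparable, and Theorem \ref{down}, which pushes coarsenings of the canonical henselian valuation down to subfields of finite index.

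Extend $v$ and $w$ uniquely to henselian valuations $\tilde v,\tilde w$ on $\overline K$, so that $v|_K=\tilde v|_K$ and $w|_K=\tilde w|_K$. Since $G_L$ is pro-$p$ and $G_M$ is pro-$q$ for distinct primes, their intersection $G_L\cap G_M=G_{LM}$ (as subgroups of $G_K$) is trivial, so $LM\subseteq\overline K$ is separably closed. Inside $LM$ I would build a cofinal family of finite Galois extensions $N/K$ climbing via compatible subextensions of $L/K$ and of $M/K$, chosen so that both $\tilde v|_N$ and $\tilde w|_N$ are henselian on $N$ and moreover coarsen $v_N$ (the latter via Theorem \ref{down} iterated inside $L$ using that $v$ is a coarsening of $v_L$, and similarly inside $M$ for $w$). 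On each such $N$, Theorem \ref{fk} forces $\tilde v|_N$ and $\tilde w|_N$ to be comparable: otherwise $N$ would be separably closed, and $[N:K]<\infty$ plus Artin--Schreier would make $K$ separably or real closed, contradicting the joint existence of non-trivially henselian algebraic extensions $L,M$ with non-trivial pro-$p$ and pro-$q$ absolute Galois groups for $p<q$. The coarsening relation is preserved under restriction and is therefore uniform through the tower; passing down to $K$ yields that $v|_K,w|_K$ are comparable. Say $v|_K$ is the coarser. Since $v|_K$ arises as the restriction of a henselian valuation on some such $N$ that coarsens $v_N$, another application of Theorem \ref{down} shows $v|_K$ coarsens $v_K$ and is thus henselian on $K$.

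The main obstacle is the existence of the tower of $N$'s on which $\tilde v|_N$ does coarsen $v_N$ — equivalently, ruling out the exceptional $H_2$-case where $\tilde v|_N$ is a strict refinement of $v_N$ (so $N\tilde v|_N$ is separably closed) for all large $N$. This is precisely where the pro-$p$/pro-$q$ hypothesis on $G_L,G_M$ and the extra hypothesis on $v$ in the case $p=2$ with $Lv$ real closed (that $v$ be the \emph{coarsest} henselian with real closed residue) both enter: if the exceptional case persisted, the decomposition--inertia structure of $\tilde v$ combined with the pro-$p$/pro-$q$ structure of $G_L,G_M$ would force arbitrarily large symmetric groups $S_n$ to appear as continuous subquotients of $G_K$, contradicting non-universality. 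Carrying out this group-theoretic construction carefully, and seeing why the special hypothesis on $v$ is exactly what blocks the $p=2$ real-closed pathology, is the technical heart of the proof.
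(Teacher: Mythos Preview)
The paper does not prove this theorem at all: it is quoted verbatim from \cite{Koe05} and used as a black box. So there is no paper proof to compare against, and your sketch should be judged on its own merits.

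As it stands, the argument has a genuine gap at its core. Your plan is to descend to finite Galois extensions $N/K$ on which $\tilde v|_N$ and $\tilde w|_N$ are henselian, then apply F.~K.~Schmidt there. But henselianity does not descend: restricting a henselian valuation on $L$ (or on $\overline K$) to a subfield $N$ gives no reason for the restriction to be henselian. You invoke Theorem~\ref{down} ``iterated inside $L$'' to fix this, but Theorem~\ref{down} requires $[L:N]<\infty$, whereas here $L/K$ is an arbitrary algebraic extension and $N/K$ is finite, so $[L:N]$ is typically infinite. There is no tower of finite $N$'s on which both restrictions are known to be henselian, and without that your use of F.~K.~Schmidt and the subsequent ``pass down to $K$'' step both collapse.

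You correctly identify the real work as the group-theoretic part: showing that failure of comparability (or of henselianity of the coarser restriction) forces all $S_n$ into $G_K$. But you have placed this entirely in a black box (``would force arbitrarily large symmetric groups\ldots carrying out this group-theoretic construction carefully\ldots is the technical heart''). That \emph{is} the theorem; the actual proof in \cite{Koe05} proceeds via an analysis of how the decomposition subgroups $G_L$ and $G_M$ sit inside $G_K$ and uses a free-product/amalgamation argument to produce universality when the valuations do not line up. Your outline neither reproduces nor bypasses that argument, and the proposed reduction via finite subextensions does not work as a substitute.
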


\begin{Ex} All of the following profinite groups are non-universal:
\begin{enumerate}
\item pro-abelian groups,
\item pro-nilpotent groups,
\item pro-soluble groups,
\item any group $G$ such that $p \nmid \#G$ for some prime $p$.
\end{enumerate}
Non-abelian free profinite groups are of course universal, and so are
absolute Galois groups of hilbertian fields.
\end{Ex}

Now we can use Theorem \ref{prod} to deduce henselianity from
$p$- and $q$-henselianity:
\begin{Prop} \label{pq} 
Suppose $G_K$ is non-universal, 
and $K(p) \neq K \neq K(q)$ for two primes $p < q$.
In case $p=2$, assume further that $K$ is not Euclidean.
If $K$ is $p$- and $q$-henselian, 
then $K$ is henselian.
\end{Prop}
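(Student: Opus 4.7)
The plan is to apply Theorem~\ref{prod} with $L := (K^{\mathrm{sep}})^{P}$ and $M := (K^{\mathrm{sep}})^{Q}$, where $P$ is a $p$-Sylow and $Q$ a $q$-Sylow subgroup of $G_K$. Since $K \neq K(p)$ and $K \neq K(q)$, both $P$ and $Q$ are non-trivial, so $G_L = P$ is non-trivial pro-$p$ and $G_M = Q$ is non-trivial pro-$q$, as required. The only remaining hypothesis of Theorem~\ref{prod} to verify is that $L$ and $M$ each carry a non-trivial henselian valuation.

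For this, I would transfer $v := v_K^p$ (non-trivial, as $K$ is $p$-henselian and $K \neq K(p)$) up to $L$ by a Sylow-theoretic analysis of decomposition groups. Fix a prolongation $\tilde v$ of $v$ to $K^{\mathrm{sep}}$ and let $D \leq G_K$ be its decomposition group. Since $v$ extends uniquely to $K(p)$, the induced map $\pi : D \to \mathrm{Gal}(K(p)/K)$ is surjective. Because $\ker\pi = D \cap G_{K(p)}$ has order coprime to $p$, a pro-$p$ Sylow $\tilde P$ of $D$ maps isomorphically onto the pro-$p$ group $\mathrm{Gal}(K(p)/K)$, and is therefore itself a $p$-Sylow of $G_K$. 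Conjugating $\tilde v$ inside $G_K$, I may arrange $\tilde P = P$, so the decomposition group of $\tilde v|_L$ in $G_L$ equals all of $G_L = P$, and hence $\tilde v|_L$ extends uniquely to $L^{\mathrm{sep}} = L(p)$. Thus $\tilde v|_L$ is a non-trivial henselian valuation on $L$; the analogous construction gives one on $M$.

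With $L$ and $M$ henselian, Theorem~\ref{prod} applies with $v := v_L$ and $w := v_M$, replacing $v$ in the exceptional case $p = 2$ with $Lv_L$ real closed by the coarsest henselian valuation on $L$ with real closed residue field, as the theorem permits; the non-Euclidean hypothesis on $K$ enters here to smooth over this branch. Since $G_K$ is non-universal by hypothesis, the theorem concludes that $v|_K$ and $w|_K$ are comparable and that the coarser of the two is henselian on $K$. As both restrictions are non-trivial, $K$ inherits a non-trivial henselian valuation, i.e., $K$ is henselian.

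I expect the main obstacle to be the Sylow-theoretic upgrade in the second paragraph: going from ``$\tilde v$ extends uniquely to $K(p)$'' to ``$D$ contains a $p$-Sylow of $G_K$.'' This step leans essentially on the fact that $G_{K(p)}$ has order prime to $p$ (so that pro-$p$ subgroups of $D$ inject into the pro-$p$ quotient $\mathrm{Gal}(K(p)/K)$) together with the pro-$p$-ness of that quotient (forcing the injection to be onto).
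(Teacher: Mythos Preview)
Your overall strategy matches the paper's: produce algebraic extensions $L,M$ of $K$ with non-trivial pro-$p$ and pro-$q$ absolute Galois groups, check they are henselian, and invoke Theorem~\ref{prod}. The gap is exactly where you flagged it, and it is a genuine one: the assertion that $G_{K(p)}$ has order prime to $p$ is not a general fact. The kernel of $G_K \twoheadrightarrow \mathrm{Gal}(K(p)/K)$ is the smallest closed normal subgroup with pro-$p$ quotient; for profinite groups this can certainly still have order divisible by $p$ (any non-abelian finite simple composition factor of order divisible by $p$ sitting inside $G_{K(p)}$ will do), and nothing in the hypotheses rules this out for $G_K$. Without that, you cannot conclude that your $p$-Sylow $\tilde P$ of $D$ is already a $p$-Sylow of $G_K$, so there is no reason the fixed field of a full $p$-Sylow of $G_K$ should be henselian with respect to an extension of $v_K^p$.

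The paper avoids this by a small but decisive change in the choice of $L$: it first passes to the henselization $L' = K^h$ of $(K,v_K^p)$ (so $G_{L'}=D$) and then takes $L$ to be the fixed field of a $p$-Sylow of $G_{L'}$, not of $G_K$. With this choice $L\supseteq L'$ is henselian for free, $G_L$ is pro-$p$, and $G_L\neq 1$ because (as you already argued) $D$ surjects onto $\mathrm{Gal}(K(p)/K)\neq 1$ and hence so does any $p$-Sylow of $D$. In other words, you had the right object $\tilde P$ in hand in your second paragraph; the unnecessary extra step was insisting that $\tilde P$ be a $p$-Sylow of all of $G_K$. Drop that, set $L=(K^{\mathrm{sep}})^{\tilde P}$, and the rest of your argument (including the $p=2$ branch, where the non-Euclidean hypothesis forces $|\mathrm{Gal}(K(2)/K)|\geq 4$ and hence $|\tilde P|>2$, so $L$ is not real closed either) goes through essentially as written and coincides with the paper's proof.
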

\begin{proof} 
Consider the henselization 
$L'$
(respectively $M'$) of $K$ with respect to the canonical
 $p$-henselian valuation $v_K^p$
(the canonical $q$-henselian valuation $v_K^q$) on $K$. 
Then define $L$ to be the fixed field of a $p$-Sylow subgroup of $G_{L'}$, and 
$M$ accordingly.
 
\emph{Claim:} $L$ is not separably closed.

\emph{Proof of Claim:} We need to show that $L'$ is not $p$-closed.
But if $\alpha \in K(p)$ has degree $p^n$ over $K$, then
-- as $v_K^p$ is $p$-henselian -- $\alpha$ is moved by some
element of $D(K(p)/K)$. As decomposition groups behave well in towers, we get
$\alpha \notin L.$ 

In case $p=2$, the same argument shows that $L$ is also not real closed.
Since $L$ is $p$-henselian and $G_L$ is pro-$p$, $L$ is also henselian, 
and likewise is $M$.
Now we consider the canonical henselian valuations $v_L$ on $L$
and the canonical henselian valuation $v_M$ on $M$.
If $p=2$ and $Lv_L$ is real closed, we replace $v_L$ 
by the coarsest henselian valuation on $L$ with 
real closed residue field. As $L$ is not real closed, this is again a 
non-trivial henselian valuation.

By Theorem \ref{prod}, the restrictions $v_L|_{K}$ and $v_M|_{K}$ are comparable
and the coarser one is henselian. As $L$ and $M$ are algebraic extensions
of $K$, none of the restrictions is trivial.
Hence $K$ is henselian.
\end{proof} 

\begin{Prop} \label{def2}
Let $G_K$ be non-universal.
Assume that there are
two primes $ p < q$ with $p,q \mid \# G_K$ and such that
$K(p) \neq K \neq K(q)$ holds. 
If $K$ is henselian, then $K$ is henselian
with respect to a non-trivial $\emptyset$-definable valuation. 
\end{Prop}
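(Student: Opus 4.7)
The plan is to combine the canonical henselian valuation $v_K$ with the canonical $p$- and $q$-henselian valuations $v_K^p, v_K^q$ (which are $\emptyset$-definable by Theorem~\ref{pdef}) and argue by cases on the residue field $Kv_K$. Note that $K \neq K(p)$ implies $K$ is not separably closed, so $v_K$ is non-trivial.

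\emph{Case A.} If $v_K \in H_2(K)$, i.e.\ $Kv_K$ is separably closed, then Theorem~\ref{H2} directly delivers a non-trivial $\emptyset$-definable henselian valuation on $K$. \emph{Case B.} Otherwise $v_K \in H_1(K)$, which forces $H_2(K) = \emptyset$, so $v_K$ is the finest henselian valuation on $K$. The key claim is that at least one of $v_K^p, v_K^q$ is henselian -- equivalently, by Observation~\ref{coarse}, a (possibly non-proper) coarsening of $v_K$. Granting this, Theorem~\ref{pdef} supplies the desired $\emptyset$-definable non-trivial henselian valuation on $K$; if $\zeta_p \notin K$ (and $\mathrm{char}(K) \neq p$), one first passes to the $\emptyset$-interpretable extension $K(\zeta_p)$, defines the (unique) prolongation of the valuation there, and restricts to $K$.

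To establish the claim, suppose for contradiction that both $v_K^p$ and $v_K^q$ strictly refine $v_K$. Then the induced quotient valuations $\bar v_p, \bar v_q$ on $Kv_K$ are non-trivial and $p$- respectively $q$-henselian by the standard composition fact. Since $v_K$ is a strict coarsening of $v_K^r$ and $v_K^r$ is by definition either the coarsest element of $H_2^r(K)$ or, if $H_2^r(K) = \emptyset$, the finest of $H_1^r(K)$, it follows that $v_K \in H_1^r(K)$, whence $Kv_K \neq (Kv_K)(r)$ for $r \in \{p, q\}$. The residue exact sequence $I_{v_K} \to G_K \to G_{Kv_K}$ exhibits $G_{Kv_K}$ as a quotient of $G_K$, hence as non-universal. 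Proposition~\ref{pq} then applies to $Kv_K$ and forces it to be henselian; composing any non-trivial henselian valuation on $Kv_K$ with $v_K$ yields a strict henselian refinement of $v_K$, contradicting $v_K$ being the finest henselian valuation on $K$.

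The main obstacle is the Euclidean sub-case $p = 2$ with $Kv_K$ Euclidean, in which Proposition~\ref{pq} is not directly applicable to $Kv_K$. The workaround is to pre-process by passing to $K(i)$, which is $\emptyset$-interpretable in $K$ (since $Kv_K$ Euclidean forces $K$ formally real, so $i \notin K$) and whose canonical henselian valuation has residue field $Kv_K(i)$, which is $2$-closed and hence not Euclidean. Rerunning the argument on $K(i)$ then yields a $\emptyset$-definable henselian valuation on $K(i)$ that restricts to one on $K$; verifying that the hypotheses of the proposition still hold on $K(i)$ -- in particular, the existence of two suitable primes dividing $\#G_{K(i)}$ -- requires care.
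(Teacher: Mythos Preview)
Your approach is essentially the paper's: both prove that at least one of $v_K^p$, $v_K^q$ must be henselian (hence $\emptyset$-definable via Theorem~\ref{pdef}) by assuming otherwise and using Proposition~\ref{pq} on $Kv_K$ to contradict the definition of $v_K$. Two minor organisational differences are worth noting. First, your Case~A is harmless but redundant: if $Kv_K$ is separably closed then $v_K\in H_2^p(K)$, so $v_K^p$ (the coarsest member of $H_2^p(K)$) already coarsens $v_K$ and is henselian by Observation~\ref{coarse}; the paper's contradiction argument absorbs this automatically via the conclusion $Kv_K\neq Kv_K(p)$. Second, the paper adjoins $\zeta_p,\zeta_q$ \emph{before} running the argument rather than after; this is cleaner because it confines the Euclidean obstruction to a single explicit sub-case and avoids having to match $v_{K(\zeta_p)}^p$ with the prolongation of $v_K^p$.

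The one genuine gap you flag --- the Euclidean sub-case --- is exactly where the paper does something specific that your sketch does not. Your plan is to pass to $K(i)$ and rerun the two-prime argument there, and you correctly worry that $K(i)$ may fail to admit two suitable primes. The paper's resolution: the only way the reduction fails is when $p=2$, $K$ is Euclidean, and $G_{K(i)}$ is pro-$q$. In that situation one does \emph{not} need two primes on $K(i)$ at all. Since $[K(i)(\zeta_q):K(i)]$ is prime to $q$ while $G_{K(i)}$ is pro-$q$, we have $\zeta_q\in K(i)$ automatically; and because $G_{K(i)}$ is pro-$q$, any $q$-henselian valuation on $K(i)$ is already henselian. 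Hence $v_{K(i)}^q=v_{K(i)}$ is non-trivial, henselian, and $\emptyset$-definable, and restricting to $K$ finishes the job. So the missing piece in your write-up is precisely this one-prime endgame on $K(i)$.
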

\begin{proof}
As long as we define a coarsening of $v_K$ without parameters, we
may assume that $\zeta_p, \zeta_q \in K$
if $\mathrm{char}(K)\neq p$ or $q$ respectively: 
The only special case is when $p=2$ and $K$ is Euclidean and $G_{K(i)}$ is 
pro-$q$. Then $K(i)$ already contains $\zeta_q$ and thus 
$v_{K(i)^q} = v_{K(i)}$ is a non-trivial 
$\emptyset$-definable henselian valuation 
on $K(i)$. In this case, its restriction to $K$ is
non-trivial $\emptyset$-definable henselian valuation on $K$. 

So now assume $\zeta_p, \zeta_q \in K$. In particular,
in case $p=2$, $K$ is not formally real and so $Kv_K^2$ cannot be 
Euclidean. All
these extensions still have non-universal absolute Galois groups. 

As $K$ is henselian, it is in particular $p$- and $q$-henselian.
We consider the canonical $p$-henselian ($q$-henselian) valuation $v_K^p$ 
($v_K^q$
respectively) on $K$. 
If $v_K^p$ or $v_K^q$ is henselian, then we have found
a $\emptyset$-definable henselian valuation. 

But this must always be the case: Assume that 
neither $v_K^p$ nor $v_K^q$ is henselian.
Then $v_K$ is a proper coarsening of $v_K^p$, 
and thus $Kv_K$ is $p$-henselian and
satisfies $Kv_K \neq Kv_K(p)$.
Similarly, $Kv_K$ is $q$-henselian and $Kv_K \neq Kv_K(q)$ holds.
Therefore, by Proposition 
\ref{pq}, $Kv_K$ is henselian. This contradicts the definition
of $v_K$.
\end{proof}

We can now prove our main result on henselian fields with non-universal absolute
Galois group.
\begin{Thm} \label{can} \label{non-universal}
Let $K$ be henselian, and assume that $G_K$ is non-universal. 
If $K$ is neither separably nor real closed, then
$K$ admits a $\emptyset$-definable henselian valuation.
If $Kv_K$ is neither separably nor real closed, then $v_K$ is
$\emptyset$-definable.
\end{Thm}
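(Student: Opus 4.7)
The plan is to combine Proposition \ref{def2} (which, for two primes $p<q$ with $K \neq K(p)$ and $K \neq K(q)$, produces a $\emptyset$-definable henselian valuation coarsening $v_K$) with a pin-down lemma that, under the residue-field hypothesis of the second assertion, identifies this valuation with $v_K$ itself.

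First I would prove the following pin-down lemma: \emph{if $p$ is a prime, $v_K^p$ is henselian on $K$, and $Kv_K \neq Kv_K(p)$, then $v_K^p = v_K$.} By Observation \ref{coarse}, $v_K^p$ henselian means $v_K^p$ coarsens $v_K$. Suppose the coarsening is proper, so that $v_K^p$ induces a non-trivial henselian valuation $\bar{u}$ on $Kv_K^p$ with residue field $Kv_K$. If $v_K^p \in H_2^p(K)$, then $Kv_K^p = Kv_K^p(p)$, yet Proposition \ref{pex} applied to $(Kv_K^p, \bar{u})$ combined with $Kv_K \neq Kv_K(p)$ forces $Kv_K^p \neq Kv_K^p(p)$, a contradiction. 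If instead $v_K^p \in H_1^p(K)$, it is the \emph{finest} element of $H_1^p(K)$; but $v_K$ itself lies in $H_1^p(K)$ (being $p$-henselian with $Kv_K \neq Kv_K(p)$) and strictly refines $v_K^p$, again a contradiction.

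For the first assertion, I would seek two primes $p<q$ with $K \neq K(p)$ and $K \neq K(q)$, and then invoke Proposition \ref{def2}. Such primes are produced in the generic situation by the henselian short exact sequence $1 \to I_{v_K} \to G_K \to G_{Kv_K} \to 1$ together with the decomposition of $I_{v_K}$ into tame and wild parts and with the non-universality of $G_K$. In the edge case that only a single prime $p$ gives $K \neq K(p)$, I would argue that $v_K^p$ is already henselian: otherwise $v_K^p$ strictly refines $v_K$ and induces a non-trivial $p$-henselian valuation on $Kv_K$ whose existence, combined with the single-prime constraint on $G_K$ and non-universality, contradicts the canonicity of $v_K$ via an argument modelled on Proposition \ref{pq}. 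Theorem \ref{pdef} then provides the $\emptyset$-definable henselian valuation, with the Euclidean $p = 2$ case sidestepped by passing to the $\emptyset$-definable extension $K(i)$ exactly as in Corollary \ref{real}.

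For the second assertion, the hypothesis that $Kv_K$ is neither separably nor real closed supplies a prime $p$ with $Kv_K \neq Kv_K(p)$ (whence $K \neq K(p)$ by Proposition \ref{pex}); one likewise extracts a second such prime $q$ from the structure of $G_{Kv_K}$ or, failing that, from ramification data of $v_K$. After arranging $\zeta_p, \zeta_q \in K$ as in Proposition \ref{def2}, that proposition yields a $\emptyset$-definable henselian valuation equal to one of $v_K^p$ or $v_K^q$, and since both satisfy the hypothesis of the pin-down lemma, the valuation is $v_K$. The principal obstacle is the single-prime edge case in the first assertion, where Proposition \ref{def2} is unavailable and one must extract henselianity of $v_K^p$ directly from the interplay of non-universality of $G_K$ with the absence of a second prime; relatedly, in the second assertion, producing the second prime $q$ with $Kv_K \neq Kv_K(q)$ when $G_{Kv_K}$ is pro-$p$-like requires a careful appeal to the henselian structure of $v_K$ on $K$.
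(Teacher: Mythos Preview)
Your pin-down lemma is correct and is essentially the mechanism the paper uses in the easy cases. The genuine gap is in what you flag as the ``principal obstacle'' but do not resolve: you need \emph{two} primes $p<q$ with $Kv_K\neq Kv_K(p)$ and $Kv_K\neq Kv_K(q)$, and this can simply fail. If $G_{Kv_K}$ is pro-$p$ there is only one such prime, and even when two primes divide $\#G_{Kv_K}$ one may still have, say, $Kv_K=Kv_K(q)$ (the group need not surject onto $\mathbb{Z}/q$). Your suggested fixes do not work: ``ramification data of $v_K$'' can only produce primes $q$ with $K\neq K(q)$, not with $Kv_K\neq Kv_K(q)$, so the pin-down lemma does not apply to $v_K^q$; and in the single-prime edge case of the first assertion, the condition ``$K=K(q)$ for all $q\neq p$'' does \emph{not} force $G_K$ or $G_{Kv_K}$ to be pro-$p$, so from $Kv_K$ being $p$-henselian you cannot conclude it is henselian, and the appeal to an argument ``modelled on Proposition~\ref{pq}'' has no content (that proposition needs two primes).

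The paper's route is different in two respects. First, for the existence statement it immediately disposes of the cases $Kv_K$ separably closed and $Kv_K$ real closed via Theorem~\ref{H2} and Corollary~\ref{real}; everything thereafter assumes $Kv_K$ is neither, so the two assertions are handled simultaneously. Second, the case split is on $G_{Kv_K}$: if $G_{Kv_K}$ is pro-$p$ one shows directly that $v_K=v_K^p$ (a proper refinement would make $Kv_K$ $p$-henselian, hence henselian since $G_{Kv_K}$ is pro-$p$, contradicting canonicity). If instead two primes $p,q$ divide $\#G_{Kv_K}$ and both give proper extensions of $Kv_K$, your pin-down lemma combined with Proposition~\ref{def2} finishes. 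The remaining case---$p,q\mid\#G_{Kv_K}$ but, say, $Kv_K=Kv_K(q)$---is handled by passing to finite Galois extensions $M/K$ for which $Mw(p)\neq Mw\neq Mw(q)$ (where $w=v_K|_M$), applying the previous argument on each such $M$, and recovering $\mathcal{O}_{v_K}$ as the intersection over all such $M$ of $(\mathcal{O}_{v_M^p}\cdot\mathcal{O}_{v_M^q})\cap K$, uniformly in the parameters via Theorem~\ref{pdef} and Corollary~\ref{uni}. This finite-extension-plus-intersection trick is the missing idea in your proposal.
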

\begin{proof} By assumption, $K$ is neither separably nor real closed.
If $K$ is henselian and $Kv_K$ is separably closed (respectively real closed), 
then
$K$ admits a $\emptyset$-definable henselian valuation by Theorem \ref{H2}
(respectively Corollary \ref{real}). Thus, we may assume from now on that
$Kv_K$ is neither separably nor real closed.

In this case, 
$v_K$ is the finest henselian valuation
on $K$ and thus $Kv_K$ is not henselian.
Furthermore, there is some prime $p$ with
$p \mid \#G_{Kv_K}$. Assume first that $G_{Kv_K}$ is pro-$p$, 
then it follows that $Kv_K\neq Kv_K(p)$ and thus $K\neq K(p)$ 
(Proposition \ref{pex}). 
In particular,
$v_K$ must be a coarsening of $v_K^p$. But if $v_K$ was a proper coarsening
of $v_K^p$, then $Kv_K$ would be $p$-henselian and hence  -- as
$G_{Kv_K}$ is pro-$p$ -- henselian or real
closed.
Since we have assumed that $Kv_K$ is neither real closed nor henselian, 
we get $v_K = v_K^p$. 
As in previous proofs (see for example the proof of \ref{def2}), 
we may assume $\zeta_p \in K$ if
$\mathrm{char}(K)\neq p$, so $v_K$ is $\emptyset$-definable.

Now consider the case that there are (at least) two primes $p < q$ with
$p,q \mid \#G_{Kv_K}$. Thus, also $p,q \mid \# G_K$ holds.
If $Kv_K(p)\neq Kv_K \neq Kv_K(q)$, then 
-- using Proposition \ref{pex} once more -- we have
$K(p)\neq K \neq K(q)$. By the proof of Proposition \ref{def2}, 
one of $v_K^p$ and $v_K^q$ is henselian.
Say $v_K^p$ is henselian, then we get $v_K \subset v_K^p$ by Observation
\ref{coarse}.
But $v_K$ is also a coarsening of $v_K^p$, as $Kv_K \neq Kv_K(p)$. 
Thus, we conclude
$v_K=v_K^p$, and hence $v_K$ is again $\emptyset$-definable. 

Finally, if there are two primes $p,q \mid G_{Kv_K}$, but $Kv_K = Kv_K(p)$
or $Kv_K = Kv_K(q)$, we want to consider finite Galois extensions
$L$ of $Kv_K$ with $L(p)\neq L \neq L(q)$. 
Let $M$ be a finite Galois
extension of $K$, and let $w$ be the unique prolongation of $v_K$ to $M$.
Note that $G_M$ is again non-universal and, as $Mw$ is still neither
separably nor real closed, $w=v_M$ holds. 
If $Mw(p)\neq Mw \neq Mw(q)$, then $w$ is $\emptyset$-definable on
$M$ by $v_M^p$ or $v_M^q$ as above. Say $w = v_M^p$. 
As $w$ is in particular
$q$-henselian and $Mw \neq Mw(q)$, we get $w \supset v_m^q$.  
Thus, in any case the finest common coarsening
of $v_M^p$ and $v_M^q$ is equal to the coarser one of the two and furthermore
$\emptyset$-definable and henselian.

Now we fix an integer 
$n$ such that there is a Galois extension $M$ of $K$ (containing
$\zeta_p$ and $\zeta_q$ if necessary) such that
$Mw(p)\neq Mw \neq Mw(q)$.
Just like in the proof of \ref{def1}, we get a parameter-free definition of $v$
by
\begin{multline*} \bigcap \big(({\mathcal O}_{v_M^p}\cdot 
{\mathcal O}_{v_M^q})
\cap K \;\big|\;
K \subseteq M \textrm{ Galois}, \,[M:K]=n,\,M(p) \neq M \neq M(q),\\ 
\zeta_p \in M \textrm{ if }
\mathrm{char}(M) \neq p,\, \zeta_q \in M \textrm{ if }
\mathrm{char}(M) \neq q,\, 
v_M^p \in H_1^p(M), \, v_M^q \in H_1^q(M) \big).
\end{multline*}
\end{proof}

\begin{Rem}
In fact, it suffices to assume for the proof of the above theorem that
$K$ is $t$-henselian rather than henselian. This is a generalization 
of henselianity introduced in \cite{PZ78}. Like henselianity,
$t$-henselianity goes up to finite extensions and implies $p$-hense\-lianity
for any prime $p$. These are the only properties of henselianity needed in 
the proof.
In particular, we 
get that any field with a non-universal absolute Galois group which
is elementarily
equivalent to a henselian field is in fact henselian itself (since a
$\emptyset$-definable henselian valuation gives rise to a non-trivial henselian
valuation on any field with the same elementary theory). Thus, 
for any field with a non-universal absolute Galois group, henselianity is  
an elementary property in $\mathcal{L}_\textrm{ring}$. 
\end{Rem}

Our Galois-theoretic condition is moreover 
also a condition on the residue field.
\begin{Obs} \label{nu}
Let $(K,v)$ be a henselian valued field. Then $G_K$ is non-universal iff
$G_{Kv}$ is non-universal.
\end{Obs}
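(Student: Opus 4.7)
The plan is to apply the standard ramification-theoretic exact sequence
\[
1 \longrightarrow I_w \longrightarrow G_K \longrightarrow G_{Kv} \longrightarrow 1,
\]
where $w$ is the unique prolongation of $v$ to $K^{\mathrm{sep}}$ and $I_w$ is its inertia subgroup (\cite{EP05}, Theorem 5.2.7), together with the classical fact that $I_w$ is pro-soluble (\cite{EP05}, Lemma 5.3.2). The direction ``$G_K$ non-universal $\Rightarrow$ $G_{Kv}$ non-universal'' is then essentially immediate: since the sequence exhibits $G_{Kv}$ as a continuous quotient of $G_K$, any finite subquotient of $G_{Kv}$ is automatically a subquotient of $G_K$, so a finite group that fails to be a subquotient of $G_K$ also fails to be a subquotient of $G_{Kv}$.

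For the converse I would first establish the key claim: \emph{every non-abelian finite simple subquotient $S$ of $G_K$ is already a subquotient of $G_{Kv}$}. Indeed, given a closed subgroup $H \leq G_K$ and a closed normal subgroup $N \trianglelefteq H$ with $H/N \cong S$, the normal subgroup $H \cap I_w \trianglelefteq H$ maps into the simple group $S$ either trivially or surjectively. The surjective case would make $S$ a continuous quotient of a closed subgroup of the pro-soluble group $I_w$, which is impossible since $S$ is non-abelian simple. Hence $H \cap I_w \subseteq N$, and the surjection $H \twoheadrightarrow S$ factors through $H/(H \cap I_w) \hookrightarrow G_K/I_w \cong G_{Kv}$, exhibiting $S$ as a subquotient of $G_{Kv}$.

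To finish the contrapositive of the converse I would then use the standard embedding of every finite group $F$ into some alternating group $A_n$ (Cayley's embedding $F \hookrightarrow S_{|F|}$ composed with $S_m \hookrightarrow A_{m+2}$). If $G_K$ were universal, every $A_n$ with $n \geq 5$ would be a non-abelian finite simple subquotient of $G_K$ and thus, by the key claim, a subquotient of $G_{Kv}$; since every finite group sits inside some such $A_n$, this would force $G_{Kv}$ to be universal as well. The main obstacle will be precisely the key claim: the subquotient relation is not closed under group extensions, so one cannot naively split an arbitrary finite subquotient of $G_K$ into an inertia piece and a residue piece. Restricting attention to non-abelian simple $S$ is what makes simplicity kill the inertia contribution, while pro-solubility of $I_w$ supplies the contradiction in the alternative case.
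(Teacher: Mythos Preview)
Your proposal is correct and follows essentially the same approach as the paper: both use the exact sequence $I_w \to G_K \to G_{Kv}$ together with pro-solubility of $I_w$, and both reduce the harder direction to showing that sufficiently large $A_n$ (equivalently $S_n$) cannot occur as subquotients of $G_K$ once they are excluded from $G_{Kv}$. Your ``key claim'' that every non-abelian finite simple subquotient of $G_K$ already occurs in $G_{Kv}$ is exactly the lemma the paper invokes implicitly in its one-line assertion ``As $I_v$ is soluble, $S_n$ \ldots\ is not a subquotient of $G_K$, either''; you have simply spelled it out more carefully.
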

\begin{proof} Recall the exact sequence
$$I_v \longrightarrow G_{K} \longrightarrow G_{Kv}.$$
If $G_K$ is non-universal, then some finite group does not appear as 
a Galois group over any finite extension of $K$, and hence
the same holds for $Kv$.\\
On the other hand, if $G_{Kv}$ is non-universal, there is some $n_0 \in 
\mathbb{N}$
such that neither $S_n$ nor $A_n$ 
(for $n \geq n_0$) occur as a subquotients of $G_{Kv}$.
As $I_v$ is soluble, $S_n$ (for $n \geq \max\{5,n_0\}$) is not
a subquotient of $G_K$, either.
\end{proof}

In particular, we can use the observation to define a range of 
power series valuations.
\begin{Cor}
Let $K$ be a field with $G_K$ non-universal. Let
$\Gamma$ be
 a non-trivial 
ordered abelian group, and assume that
$\Gamma$ is non-divisible in case that $K$ is separably or real closed.
Then
there is a $\emptyset$-definable non-trivial
henselian valuation on $K((\Gamma))$. 
If $K$ is not henselian and neither
separably nor real closed, then the power 
series valuation is definable.
\end{Cor}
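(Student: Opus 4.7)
The plan is to apply Theorem~\ref{non-universal} to the power series field $L := K((\Gamma))$, using Observation~\ref{nu} to transfer the non-universality hypothesis from $K$ to $L$. Note that $L$ is henselian with respect to the power series valuation $v$, whose residue field is $K$ and whose value group is $\Gamma \neq 0$. Since $L$ is henselian with residue field $K$, Observation~\ref{nu} gives that $G_L$ is non-universal.

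Next I would verify that $L$ is neither separably nor real closed under the hypotheses, as this is what is needed to invoke Theorem~\ref{non-universal}. The point is that the value group of a separably closed or real closed valued field must be divisible (for a separably closed field by extracting $n$-th roots, and for a real closed field by passing to its algebraic closure, obtained by adjoining $i$). Thus, if $K$ is not separably or real closed, then neither is the residue field $Lv = K$, and consequently neither is $L$ (since separable/real closedness passes to residue fields of henselian valuations); while if $K$ is separably or real closed but $\Gamma$ is non-divisible, then $L$ has the non-divisible value group $\Gamma$ and so cannot be separably or real closed. In either case, Theorem~\ref{non-universal} supplies a non-trivial $\emptyset$-definable henselian valuation on $L$, proving the first statement.

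For the second statement I would identify the power series valuation $v$ with the canonical henselian valuation $v_L$. Since $Lv = K$ is not separably closed, $v$ lies in $H_1(L)$, and by the tree structure of henselian valuations described after Theorem~\ref{fk}, every element of $H_1(L)$ is a coarsening of $v_L$. Hence $v_L$ refines $v$ and therefore induces a henselian valuation on the intermediate residue field $Lv = K$; since $K$ is not henselian by assumption, this induced valuation must be trivial, forcing $v_L = v$. Because $Lv_L = K$ is neither separably nor real closed, the second clause of Theorem~\ref{non-universal} then yields that $v_L = v$ is $\emptyset$-definable.

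The main subtlety lies in the second statement: Theorem~\ref{non-universal} only guarantees definability of $v_L$ when $Lv_L$ is neither separably nor real closed, so to conclude definability of the \emph{power series} valuation specifically one genuinely needs the coincidence $v = v_L$, and this is precisely where the non-henselianity of $K$ is essential.
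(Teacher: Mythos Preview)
Your proposal is correct and follows essentially the same route as the paper: apply Observation~\ref{nu} to transfer non-universality from $K$ to $K((\Gamma))$, then invoke Theorem~\ref{non-universal}; for the second part, identify the power series valuation with $v_L$ via the non-henselianity of $K$. You supply more detail than the paper's two-line proof---in particular, you explicitly check that $K((\Gamma))$ is neither separably nor real closed (via divisibility of value groups), and you spell out why $v = v_L$---but the underlying argument is the same.
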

\begin{proof} The first statement is 
immediate from the previous Observation and Theorem 
\ref{non-universal}. The second also 
follows from Theorem \ref{can}: If $K$ is not 
henselian, then the power series valuation is exactly the canonical henselian
valuation.
\end{proof}

One example of fields with non-universal absolute Galois group are NIP fields
of positive characteristic. We call a field NIP if $\mathrm{Th}(K)$ is NIP
in the sense of Shelah (see \cite{Adl08} for some background on NIP theories).
In \cite{KSW11} (Corollary 4.5), the authors show that if $K$ is an 
infinite NIP field
of characteristic $p >0$, then $p \nmid \#G_K$. Thus, we get the following
\begin{Cor}
Let $(K,v)$ be a non-trivially henselian valued field, $K$
not separably closed. If
\begin{itemize} 
\item $K$ is NIP and $\mathrm{char}(K) > 0$, or
\item $Kv$ is NIP and $\mathrm{char}(Kv) > 0$,
\end{itemize} 
then $K$ admits a 
non-trivial
$\emptyset$-definable henselian valuation.
\end{Cor}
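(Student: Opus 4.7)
The plan is to reduce to Theorem \ref{non-universal}, using Theorem \ref{H2} to handle the degenerate subcase when the residue field is separably closed. The key input from the NIP hypothesis is the cited Kaplan-Scanlon-Wagner result: any infinite NIP field $F$ of characteristic $p > 0$ satisfies $p \nmid \#G_F$, which in particular forces $G_F$ to be non-universal (since $p \mid |S_n|$ for $n \geq p$). Thus in either of the two listed cases it will suffice to verify that $G_K$ is non-universal and that $K$ is neither separably nor real closed.

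Suppose first that $K$ is NIP with $\mathrm{char}(K) = p > 0$. Since $(K,v)$ is non-trivially henselian, $K$ is infinite, so KSW yields $G_K$ non-universal. Positive characteristic rules out $K$ being real closed, and $K$ not separably closed is given, so Theorem \ref{non-universal} supplies the desired $\emptyset$-definable henselian valuation. Suppose instead that $Kv$ is NIP with $\mathrm{char}(Kv) = p > 0$. If $Kv$ happens to be separably closed, then Theorem \ref{H2} applies directly, with no further analysis needed. Otherwise, $Kv$ is either finite (whence $G_{Kv} \cong \hat{\mathbb{Z}}$ is pro-abelian, hence non-universal) or infinite (whence KSW applied to $Kv$ itself gives $G_{Kv}$ non-universal); either way, Observation \ref{nu} transfers non-universality up to $G_K$.

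It remains to rule out $K$ being real closed in this last subcase: were it so, the standard fact that the residue field of a henselian real closed field is either real closed or algebraically closed would force $Kv$ into one of these two classes, but $\mathrm{char}(Kv) > 0$ excludes real closedness and $Kv$ not being separably closed excludes algebraic closedness. Theorem \ref{non-universal} then concludes. The only mildly delicate point is the bookkeeping for the residue field in the second case, in particular isolating the separably closed subcase where the NIP hypothesis plays no role and one must switch to Theorem \ref{H2}; beyond this, no serious obstacle arises.
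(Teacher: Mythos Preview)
Your argument is correct and matches the paper's route: use the Kaplan--Scanlon--Wagner result to get $G_K$ (respectively $G_{Kv}$) non-universal, in the second case transfer this to $G_K$ via Observation \ref{nu}, and conclude by Theorem \ref{non-universal}. Your treatment is in fact more careful than the paper's terse two-line proof---you explicitly handle finite $Kv$ (where KSW does not literally apply but $G_{Kv}\cong\hat{\mathbb{Z}}$ is abelian) and rule out $K$ real closed---though the detour through Theorem \ref{H2} when $Kv$ is separably closed is unnecessary, since a trivial $G_{Kv}$ is vacuously non-universal and the main argument already covers it.
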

\begin{proof} The first statement follows from Theorem \ref{non-universal}. 
The second statement
is now a consequence of Observation \ref{nu}.
\end{proof}

\bibliographystyle{alpha}
\bibliography{franzi}
\end{document}